\newtheorem{theorem}{Theorem}[section]
\newtheorem{lemma}[theorem]{Lemma}
\newtheorem{corollary}[theorem]{Corollary}
\newproof{proof}{Proof}
\numberwithin{equation}{section}
\newcommand{\cB}{\mathcal{B}}
\newcommand{\cP}{\mathcal{P}}
\newcommand{\C}{\mathbb{C}}
\newcommand{\N}{\mathbb{N}}
\newcommand{\R}{\mathbb{R}}
\newcommand{\dR}{{\bf\dot{\R}}}
\newcommand{\eps}{\varepsilon}
\begin{document}
\begin{frontmatter}
\title{Algebras of Continuous Fourier Multipliers on Variable Lebesgue Spaces}

\author[AK]{Alexei Karlovich\corref{Alexei}}
\ead{oyk@fct.unl.pt}

\cortext[Alexei]{Corresponding author}

\address[AK]{%%
Centro de Matem\'atica e Aplica\c{c}\~oes,
Departamento de Matem\'atica,
Faculdade de Ci\^encias e Tecnologia,\\
Universidade Nova de Lisboa,
Quinta da Torre,
2829--516 Caparica,
Portugal}

%%%----------------------------------------------------------------------------
\begin{abstract}
We show that several definitions of algebras of continuous Fourier multipliers
on variable Lebesgue spaces over the real line are equivalent under some
natural assumptions on variable exponents. Some of our results are new
even in the case of standard Lebesgue spaces and give answers on two
questions about algebras of continuous Fourier multipliers on Lebesgue spaces 
over the real line posed by H. Mascarenhas, P. Santos and M. Seidel.
\end{abstract}

\begin{keyword}
Continuous Fourier multiplier \sep 
variable Lebesgue space \sep
Stechkin's inequality \sep
piecewise continuous function \sep
slowly oscillating function.
\end{keyword}
\end{frontmatter}

%%%----------------------------------------------------------------------------
\section{Introduction}
Let $\dR$ and $\overline{\R}$ be the compactifications of the real line $\R$
by means of the point $\infty$ and the two points $\pm\infty$, respectively.
The space of continuous functions on $\R$ that have finite limits at
$-\infty$ and $+\infty$ is denoted by $C(\overline{\R})$, and
\[
C(\dR):=\{f\in C(\overline{\R})\ :\ f(-\infty)=f(+\infty)\}.
\]
Let $\C$ stand for the constant complex-valued functions on $\R$ and
$C_0(\R)$ for the continuous functions on $\R$ which vanish at $\pm\infty$.
Notice that $\C$, $C_0(\R)$, $C(\dR)$, and $C(\overline{\R})$ are closed
subalgebras of $L^\infty(\R)$, and that $C(\dR)$ decomposes into the direct
sum $C(\dR)=\C {\bf{\dot{+}}} C_0(\R)$.

For $f\in L^1(\R)$, let $Ff$ denote the Fourier transform
\[
(Ff)(x):=\int_{\R}f(t)e^{ixt}\,dt,\quad x\in\R.
\]
If $f\in L^1(\R)\cap L^2(\R)$, then $Ff\in L^2(\R)$ and 
$\|Ff\|_{L^2(\R)}=\sqrt{2\pi}\|f\|_{L^2(\R)}$.  Since $L^1(\R)\cap L^2(\R)$ 
is dense in $L^2(\R)$, the operator $F$ extends to a bounded linear operator 
of $L^2(\R)$ onto $L^2(\R)$, which will also be denoted by $F$. The inverse 
of $F$ is given by $(F^{-1}g)(t)=(2\pi)^{-1}(Fg)(-t)$ for a.e. $t\in\R$. 

Let $p(\cdot):\R\to[1,\infty]$ be a measurable a.e. finite function. By
$L^{p(\cdot)}(\R)$ we denote the set of all complex-valued functions
$f$ on $\R$ such that
\[
I_{p(\cdot)}(f/\lambda):=\int_{\R} |f(x)/\lambda|^{p(x)} dx <\infty
\]
for some $\lambda>0$. This set becomes a Banach function space when
equipped with the norm
\[
\|f\|_{p(\cdot)}:=\inf\big\{\lambda>0: I_{p(\cdot)}(f/\lambda)\le 1\big\}.
\]
It is easy to see that if $p$ is constant, then $L^{p(\cdot)}(\R)$ is
nothing but the standard Lebesgue space $L^p(\R)$. The space
$L^{p(\cdot)}(\R)$ is referred to as a variable Lebesgue space.

Let $\cP(\R)$ denote the set of all measurable a.e. finite functions 
$p(\cdot):\R\to[1,\infty]$ such that 
\[
1<p_-:=\operatornamewithlimits{ess\,inf}_{x\in\R}p(x),
\quad
\operatornamewithlimits{ess\,sup}_{x\in\R}p(x)=:p_+<\infty.
\]
If $p(\cdot)\in\cP(\R)$, then the space $L^{p(\cdot)}(\R)$ is separable and 
reflexive, and the set $C_c^\infty(\R)$ of all infinitely differentiable 
compactly supported functions is dense $L^{p(\cdot)}(\R)$ (see, e.g., 
\cite[Chap.~2]{CF13} or \cite[Chap.~3]{DHHR11}).

Let $p(\cdot)\in\cP(\R)$. A function $a\in L^\infty(\R)$ is called a Fourier 
multiplier on the variable Lebesgue space $L^{p(\cdot)}(\R)$ if the operator
\[
W^0(a):=F^{-1}aF
\]
maps the dense set $L^2(\R)\cap L^{p(\cdot)}(\R)$ of $L^{p(\cdot)}(\R)$ into 
itself and extends to a bounded linear operator on $L^{p(\cdot)}(\R)$. Let
$M_{p(\cdot)}$ stand for the set of all Fourier multipliers on 
$L^{p(\cdot)}(\R)$. It is easy to see that that the set $M_{p(\cdot)}$ is a 
unital normed algebra under pointwise operations and the norm
%%%
\begin{equation}\label{eq:multiplier-norm}
\|a\|_{M_{p(\cdot)}}:=\|W^0(a)\|_{\cB(L^{p(\cdot)}(\R))},
\end{equation}
%%%
where $\cB(L^{p(\cdot)}(\R))$ is the algebra of all bounded linear operators
on the variable Lebesgue space $L^{p(\cdot)}(\R)$. The closure of a set 
$\mathfrak{S}$ with respect to the norm of $M_{p(\cdot)}$ will be denoted
by $\operatorname{clos}_{M_{p(\cdot)}}(\mathfrak{S})$.

Given $f\in L_{\rm loc}^1(\R)$, the Hardy-Littlewood maximal operator is 
defined by
\[
(Mf)(x):=\sup_{J\ni x}\frac{1}{|J|}\int_J|f(t)|\, dt,
\quad x\in\R,
\]
where the supremum is taken over all finite intervals $J$ containing $x$.
Here $|J|$ denotes the length of the interval $J\subset\R$.
Let $\cB_M(\R)$ denote the set of all variable exponents $p(\cdot)\in\cP(\R)$
such that the Hardy-Littlewood maximal operator $M$ is bounded on the 
variable Lebesgue space $L^{p(\cdot)}(\R)$.

L.~Diening~\cite{D04} proved that if $p(\cdot)\in\cP(\R)$ satisfies
%%%
\begin{equation}\label{eq:log-Hoelder}
|p(x)-p(y)|\le\frac{c_0}{\log(e+1/|x-y|)}, \quad x,y\in\R,
\end{equation}
%%%
for some constant $c_0>0$ and $p(\cdot)$ is constant outside some ball, then 
$p(\cdot)\in\cB_M(\R)$. Further, the behavior of $p(\cdot)$ at infinity was 
relaxed by D.~Cruz-Uribe, A.~Fiorenza, and C.~Neugebauer \cite{CFN03,CFN04}
(see also \cite[Theorem~3.16]{CF13}),  who showed that if a variable
exponent $p(\cdot)\in\cP(\R)$ satisfies \eqref{eq:log-Hoelder} and there 
exist constants $c_\infty>0$ and $p_\infty>1$ such that
%%%
\begin{equation}\label{eq:log-Hoelder-infinity}
|p(x)-p_\infty|\le\frac{c_\infty}{\log(e+|x|)},\quad x\in\R,
\end{equation}
%%%
then $p(\cdot)\in\cB_M(\R)$. Following  \cite[Section~2.1]{CF13} and 
\cite[Section~4.1]{DHHR11}, we will say that if $p(\cdot)\in\cP(\R)$ 
and conditions \eqref{eq:log-Hoelder}--\eqref{eq:log-Hoelder-infinity} are 
fulfilled, then $p(\cdot)$ is globally log-H\"older continuous. The class
of all globally log-H\"older continuous exponents will be denoted by $LH(\R)$.

Observe, however, that A.~Lerner \cite{L05} (see also \cite[Example 4.68]{CF13}
and \cite[Example~5.1.8]{DHHR11}) constructed exponents $p(\cdot)\in\cP(\R)$
discontinuous at zero or at infinity and such that, nevertheless, 
$p(\cdot)\in\cB_M(\R)$. Thus neither \eqref{eq:log-Hoelder} nor 
\eqref{eq:log-Hoelder-infinity} is necessary for $p(\cdot)\in\cB_M(\R)$. For 
more information on the class $\cB_M(\R)$ we refer to
\cite[Chaps.~2--3]{CF13} and \cite[Chaps.~4--5]{DHHR11}.

Suppose that $a:\R\to\C$ is a function of finite total variation $V(a)$ given 
by
\[
V(a):=\sup \sum_{k=1}^n |a(x_k)-a(x_{k-1})|,
\]
where the supremum is taken over all partitions of $\R$ of the form
$-\infty<x_0<x_1<\dots<x_n<+\infty$
with $n\in\N$. The set $V(\R)$ of all functions of finite total variation
on $\R$ with the norm
\[
\|a\|_{V(\R)}:=\|a\|_{L^\infty(\R)}+V(a)
\]
is a unital non-separable Banach algebra. We will frequently use the
well-known fact that $C_c^\infty(\R)\subset V(\R)$.

It is well known that the Cauchy singular integral operator $S$ given 
for a function $f\in L^1_{\rm loc}(\R)$ by
\[
(Sf)(x):=\frac{1}{\pi i}\int_\R\frac{f(t)}{t-x},\quad x\in\R,
\]
where the integral is understood in the principal value sense,
is a Calder\'on-Zygmund operator. Applying \cite[Corollary~6.3.10]{DHHR11}
to the operator $S$, we conclude that the operator $S$ is bounded on the
variable Lebesgue space $L^{p(\cdot)}(\R)$.
In view of \cite[Theorem~2]{K15a},
if $p(\cdot)\in\cB_M(\R)$ and $a\in V(\R)$, then $a\in M_{p(\cdot)}$ and
the Stechkin type inequality holds:
%%%
\begin{equation}\label{eq:Stechkin}
\|a\|_{M_{p(\cdot)}}\le\|S\|_{\cB(L^{p(\cdot)}(\R))}\|a\|_{V(\R)}.
\end{equation}
%%%
For standard Lebesgue spaces, proofs of the Stechkin inequality 
\eqref{eq:Stechkin} can be found, e.g., in 
\cite[Theorem~17.1]{BKS02},
\cite[Theorem~2.11]{D79},
\cite[Theorem~6.2.5]{EG77}. 

L. H\"ormander \cite{H60} was probably the first who studied algebras of 
continuous Fourier multipliers on standard Lebesgue spaces $L^p(\R)$. Let 
$\mathcal{S}(\R)$ be the Schwartz class of rapidly decreasing functions. For 
$p\in(1,\infty)$, let
\[
m_p:=\operatorname{clos}_{M_p}\big(\mathcal{S}(\R)\big)
\]
and
%%%
\begin{equation}\label{eq:Rp}
R_p:=\big\{r\in(1,\infty)\ :\ 
\left|1/r-1/2\right|>\left|1/p-1/2\right|\big\}.
\end{equation}
%%%
L. H\"ormander described the maximal ideal space of the Banach algebra $m_p$
and proved that if $r\in R_p$, then
\[
C_0(\R)\cap M_r
\subset 
m_p
\subset 
C_0(\R)\cap M_p
\]
(see \cite[Theorems~1.16--11.7]{H60}).
A. Fig\`a-Talamanca and G. Gaudry proved in \cite[Theorem~A]{FG71} that if 
$p\in(1,\infty)\setminus\{2\}$, then there exists a function in 
$C_0(\R)\cap M_p$ which is not the 
limit in $M_p$ of Fourier transforms of integrable functions, whence 
\[
m_p\subsetneqq C_0(\R) \cap  M_p.
\]

For a variable exponent $p(\cdot)\in\cB_M(\R)$, consider the following 
algebras of continuous Fourier multipliers:
%%%
\[
C_{p(\cdot)}(\dR)
:=
\operatorname{clos}_{M_{p(\cdot)}}\big(C(\dR)\cap V(\R)\big),
\quad
C_{p(\cdot)}(\overline{\R})
:=
\operatorname{clos}_{M_{p(\cdot)}}\left(C(\overline{\R})\cap V(\R)\right).
\]

For standard Lebesgue spaces, these definitions go back to R. Duduchava 
\cite[Chap.~I, \S 3.2]{D79} and I. B. Simonenko, C. N. Min 
\cite[p.~50]{SM86}, respectively (see also \cite[pp.~325, 331]{BKS02}).

Let $\C{\bf{\dot{+}}} C_c^\infty(\R)$ denote the set of functions of the 
form $f=c+\varphi$,
where $c\in\C$ and $\varphi\in C_c^\infty(\R)$.
Our first main result is the following.
%%%----------------------------------------------------------------------------
\begin{theorem}\label{th:first}
If $p(\cdot)\in\cB_M(\R)$, then
\[
C_{p(\cdot)}(\dR)
=
C(\dR)\cap C_{p(\cdot)}(\overline{\R})
=
\operatorname{clos}_{M_{p(\cdot)}}
\big(\C {\bf{\dot{+}}} C_c^\infty(\R)\big).
\]
\end{theorem}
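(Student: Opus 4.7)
The plan is to prove the two equalities via the chain of inclusions
$\operatorname{clos}_{M_{p(\cdot)}}(\C {\bf{\dot{+}}} C_c^\infty(\R))
\subseteq C_{p(\cdot)}(\dR)
\subseteq C(\dR)\cap C_{p(\cdot)}(\overline{\R})
\subseteq \operatorname{clos}_{M_{p(\cdot)}}(\C {\bf{\dot{+}}} C_c^\infty(\R))$,
of which only the last is substantive. The first follows from $\C {\bf{\dot{+}}} C_c^\infty(\R)\subseteq C(\dR)\cap V(\R)$ and monotonicity of closures. The second follows from $C(\dR)\cap V(\R)\subseteq C(\overline{\R})\cap V(\R)$ combined with the elementary lower bound $\|a\|_{L^\infty(\R)}\le\|a\|_{M_{p(\cdot)}}$, which ensures that uniform limits of functions with equal limits at $\pm\infty$ share the same property.

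For the third inclusion, let $a\in C(\dR)\cap C_{p(\cdot)}(\overline{\R})$ and write $a=c+g$ with $c=a(\infty)$ and $g\in C_0(\R)\cap C_{p(\cdot)}(\overline{\R})$; as $c\in\C$, it suffices to produce $\varphi_k\in C_c^\infty(\R)$ with $\|g-\varphi_k\|_{M_{p(\cdot)}}\to 0$. Pick $b_n\in C(\overline{\R})\cap V(\R)$ with $b_n\to g$ in $M_{p(\cdot)}$; the $L^\infty$-bound forces $\alpha_n:=b_n(-\infty)\to 0$ and $\beta_n:=b_n(+\infty)\to 0$. Fix a smooth monotone step $\theta(x)=\int_{-\infty}^x\eta(t)\,dt$ with $\eta\in C_c^\infty(\R)$, $\eta\ge 0$, $\int\eta=1$, which lies in $V(\R)$. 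The correction $s_n:=\alpha_n+(\beta_n-\alpha_n)\theta$ has $\|s_n\|_{V(\R)}\to 0$ and, by the Stechkin inequality \eqref{eq:Stechkin}, $\|s_n\|_{M_{p(\cdot)}}\to 0$; the modified approximants $\tilde b_n:=b_n-s_n\in C_0(\R)\cap V(\R)$ therefore still converge to $g$ in $M_{p(\cdot)}$. A further cutoff $\chi_N\in C_c^\infty(\R)$ equal to $1$ on $[-N,N]$ and zero off $[-2N,2N]$ yields $\chi_N\tilde b_n\in V(\R)\cap C_c(\R)$ converging to $\tilde b_n$ in $V(\R)$, because the tail variation $V(\tilde b_n;|x|>N)$ tends to zero and $\tilde b_n\in C_0(\R)$. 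By a diagonal extraction, the problem reduces to the approximation lemma: every $h\in V(\R)\cap C_c(\R)$ belongs to $\operatorname{clos}_{M_{p(\cdot)}}(C_c^\infty(\R))$.

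This reduced lemma is the principal obstacle. The natural candidates $\varphi_\eps=h*\eta_\eps\in C_c^\infty(\R)$ (with $\eta_\eps$ a standard mollifier) converge uniformly to $h$ but generally fail to converge in $V(\R)$-norm, since whenever the Stieltjes measure $dh$ has a nontrivial singular continuous component one has $V(h-h*\eta_\eps)=\|dh-\eta_\eps*dh\|_{\mathcal{M}(\R)}\not\to 0$; hence Stechkin alone is inadequate. On the physical side, $W^0(h-h*\eta_\eps)$ is convolution by the kernel $K_\eps(t)=\big((2\pi)^{-1}-F^{-1}\eta_\eps(t)\big)F^{-1}h(t)$, which tends to $0$ pointwise and is uniformly dominated by $C/(1+|t|)$ (the bound on $F^{-1}h$ coming from one integration by parts against $dh$). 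The hardest step is to upgrade this pointwise domination to operator-norm convergence on $L^{p(\cdot)}(\R)$. I would split $K_\eps$ into a compactly supported piece whose $L^1$-norm tends to $0$ by dominated convergence (giving operator-norm smallness via Young's inequality) and a slowly decaying tail, treated by Calder\'on--Zygmund-type estimates in which the factor $(2\pi)^{-1}-F^{-1}\eta_\eps$ furnishes the cancellation needed to exploit the boundedness of the Cauchy singular integral $S$ on $L^{p(\cdot)}(\R)$ afforded by the hypothesis $p(\cdot)\in\cB_M(\R)$.
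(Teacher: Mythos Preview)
Your chain of inclusions and the reductions are sound up to the point you flag as the ``principal obstacle,'' but from there the argument is genuinely incomplete. The proposed kernel splitting does not close the gap: the tail piece $K_\eps\cdot\mathbf{1}_{\{|t|>R\}}$ is dominated by $C/(1+|t|)$ uniformly in $\eps$, but there is no reason its $\cB(L^{p(\cdot)})$-norm should be small for large $R$ uniformly in $\eps$. Convolution by a kernel of size $1/|t|$ is essentially the Hilbert transform, and the cancellation in the prefactor $1-\widehat{\eta_\eps}$ disappears at large $|t|$ (since $\widehat{\eta}(\eps t)\to 0$), so you recover the full tail of $\widehat{h}$ with no gain. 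Your correct observation that $h*\eta_\eps\not\to h$ in $V(\R)$ when $dh$ has singular continuous part shows exactly why Stechkin alone fails, and the sketched Calder\'on--Zygmund fix does not supply the missing smallness.

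The paper's mechanism is different and bypasses this entirely: it uses interpolation. By Diening's result (Theorem~\ref{th:Deining-interpolation}) one has $1/p(\cdot)=\theta/p_0+(1-\theta)/p_\theta(\cdot)$ with $p_0\in(1,\infty)$ constant and $p_\theta(\cdot)\in\cB_M(\R)$; a further classical interpolation writes $1/p_0=\eta/2+(1-\eta)/q$. Then for $h\in C_0(\R)\cap V(\R)$ the mollifications satisfy $\|h*\varphi_\delta-h\|_{L^\infty}\to 0$, while $\|h*\varphi_\delta\|_{M_q}\le\|h\|_{M_q}$ and $\|h*\varphi_\delta\|_{M_{p_\theta(\cdot)}}\le\|h\|_{M_{p_\theta(\cdot)}}$ by the convolution inequality of Theorem~\ref{th:convolution-with-multiplier}. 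Two applications of the Riesz--Thorin inequality~\eqref{eq:Riesz-Thorin} give
\[
\|h*\varphi_\delta-h\|_{M_{p(\cdot)}}
\le C\,\|h*\varphi_\delta-h\|_{L^\infty}^{\eta\theta}\to 0,
\]
with the other factors uniformly bounded via Stechkin. The same interpolation device handles the cutoff step, so the paper treats $C_0(\R)\cap V(\R)$ directly rather than first reducing to $V(\R)\cap C_c(\R)$. The essential idea you are missing is that uniform $L^\infty$-convergence of symbols can be upgraded to $M_{p(\cdot)}$-convergence once one has uniform $M_{p_\theta(\cdot)}$-bounds and an interpolation identity placing $L^{p(\cdot)}$ strictly between $L^2$ and $L^{p_\theta(\cdot)}$.
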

%%%----------------------------------------------------------------------------

For standard Lebesgue spaces $L^p(\R)$, $1<p<\infty$, the first equality is 
proved in \cite[Lemma~3.1(i)]{KS94}. We were not able to find the second 
equality stated explicitly for standard Lebesgue spaces, however it is 
implicit, e.g., in the proof \cite[Lemma~3.8]{BBK04}.

For $p(\cdot)\in\cP(\R)$, let
%%%
\begin{equation}\label{eq:range-theta}
\theta_{p(\cdot)}:=\min\big\{1,2/p_+,2-2/p_-\big\}.
\end{equation}
%%%
It follows from \cite[Corollary~2.3]{RS08} that if $p(\cdot)\in\cP(\R)$ and 
$\theta\in(0,\theta_{p(\cdot)})$, then the variable exponent 
$p_\theta(\cdot):\R\to[1,\infty]$ defined by
%%%
\begin{equation}\label{eq:p-theta}
\frac{1}{p(x)}=\frac{\theta}{2}+\frac{1-\theta}{p_\theta(x)},
\quad x\in\R,
\end{equation}
%%%
belongs to $\cP(\R)$.

Further, by $\cB_M^*(\R)$ denote the set of all variable exponents 
$p(\cdot)\in\cB_M(\R)\setminus\{2\}$ for which there exists
$\tau_{p(\cdot)}\in(0,\theta_{p(\cdot)}]$ depending on $p(\cdot)$
such that $p_\theta(\cdot)\in\cB_M(\R)$ for all 
$\theta\in(0,\tau_{p(\cdot)})$, where 
$\theta_{p(\cdot)}$ is defined by
\eqref{eq:range-theta} and $p_\theta(\cdot)$ is defined by \eqref{eq:p-theta}.
Note that in view of Lemma~\ref{le:log-Hoelder} below, we have
\[
LH(\R)\subset\cB_M^*(\R).
\]
Therefore, the class $\cB_M^*(\R)$ contains many non-trivial 
variable exponents.

For $p(\cdot)\in\cB_M^*(\R)$, consider
\[
M_{\langle p(\cdot)\rangle}:=
\bigcup_{\theta\in(0,\tau_{p(\cdot)})}M_{p_\theta(\cdot)}.
\]
By analogy with H. Mascarenhas, P. Santos and M. Seidel
\cite[p.~955]{MSS14}, \cite[p.~89]{MSS17},
consider the following algebras of continuous Fourier multipliers
on variable Lebesgue spaces $L^p(\cdot)(\R)$:
\[
C_{\langle p(\cdot)\rangle}(\dR)
:=
\operatorname{clos}_{M_{p(\cdot)}}
\big(C(\dR)\cap M_{\langle p(\cdot)\rangle}\big),
\quad
C_{\langle p(\cdot)\rangle}(\overline{\R})
:=
\operatorname{clos}_{M_{p(\cdot)}}
\left(C(\overline{\R})\cap M_{\langle p(\cdot)\rangle}\right).
\]

Note that if $p\in(1,\infty)\setminus\{2\}$ is constant, then it is not
difficult to see that
%%%
\begin{equation}\label{eq:M-p-cloud}
M_{\langle p\rangle}=\bigcup_{r\in R_p} M_r,
\end{equation}
%%%
where $R_p$ is given by \eqref{eq:Rp}. Then $C_{\langle p\rangle}(\dR)$
and $C_{\langle p\rangle}(\overline{\R})$ coincide with the algebras
denoted in \cite[p.~955]{MSS14}, \cite[p.~89]{MSS17} by
$C(\dR)_p$ and $C(\overline{\R})_p$, respectively. 

Our main motivation for this work are the questions
whether $C_p(\dR)$ coincides with $C_{\langle p\rangle}(\dR)$ and 
whether $C_{\langle p\rangle}(\dR)$ is a proper subset of $C(\dR)\cap M_p$
for a constant exponent $p\in(1,\infty)\setminus\{2\}$, which were posed by 
H.~Mascarenhas, P.~Santos and M.~Seidel (see \cite[Remark~25]{MSS14} and 
\cite[Remark~1~(iii)]{MSS17}, respectively). Note that the positive answer 
on the first question for the discrete analogues of $C_p(\dR)$ and
$C_{\langle p\rangle}(\dR)$ is contained in
\cite[Propositions~2.45 and 6.8(b)]{BS06}.

Our second main result is the following.
%%%----------------------------------------------------------------------------
\begin{theorem}\label{th:second}
If $p(\cdot)\in\cB_M^*(\R)$, then 
\[
C_{p(\cdot)}(\dR)=C_{\langle p(\cdot)\rangle}(\dR),
\quad
C_{p(\cdot)}(\overline{\R})=C_{\langle p(\cdot)\rangle}(\overline{\R}).
\]
\end{theorem}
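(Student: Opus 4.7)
My plan is to prove each of the two equalities by establishing two inclusions. The inclusions $C_{p(\cdot)}(\dR)\subseteq C_{\langle p(\cdot)\rangle}(\dR)$ and $C_{p(\cdot)}(\overline{\R})\subseteq C_{\langle p(\cdot)\rangle}(\overline{\R})$ will be the easy direction: for every $\theta\in(0,\tau_{p(\cdot)})$, the Stechkin inequality \eqref{eq:Stechkin} applied to $p_\theta(\cdot)\in\cB_M(\R)$ gives $V(\R)\subseteq M_{p_\theta(\cdot)}$, hence $V(\R)\subseteq M_{\langle p(\cdot)\rangle}$. Intersecting with $C(\dR)$ (respectively $C(\overline{\R})$) and taking $M_{p(\cdot)}$-closures yields both inclusions.

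The substantial content is the reverse direction, namely showing that every $a\in C(\dR)\cap M_{p_\theta(\cdot)}$ with $\theta\in(0,\tau_{p(\cdot)})$ can be approximated in the $M_{p(\cdot)}$-norm by elements of $C(\dR)\cap V(\R)$. The central tool will be interpolation: relation \eqref{eq:p-theta} identifies $L^{p(\cdot)}(\R)$ as the complex interpolation space $[L^{p_\theta(\cdot)}(\R),L^2(\R)]_{\theta}$ via \cite{RS08}, and Plancherel's theorem gives $\|W^0(c)\|_{\cB(L^2(\R))}=\|c\|_{L^\infty(\R)}$. Applying the Calder\'on interpolation theorem to $W^0(a-b)$ should therefore produce the multiplier inequality
\[
\|a-b\|_{M_{p(\cdot)}}\le C\,\|a-b\|_{M_{p_\theta(\cdot)}}^{\,1-\theta}\,\|a-b\|_{L^\infty(\R)}^{\,\theta}
\]
for all $b\in M_{p_\theta(\cdot)}$. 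It then suffices to produce $b_n\in C(\dR)\cap V(\R)$ with $b_n\to a$ uniformly and $\sup_n\|b_n\|_{M_{p_\theta(\cdot)}}<\infty$.

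To build such $b_n$, I would decompose $a=c+g$ with $c=a(\infty)\in\C$ and $g:=a-c\in C_0(\R)\cap M_{p_\theta(\cdot)}$, then regularize $g$ by mollification followed by truncation. Fix $\rho\in C_c^\infty(\R)$ with $\int\rho=1$ and set $\rho_m(x):=m\rho(mx)$; fix $\chi_n\in C_c^\infty(\R)$ equal to $1$ on $[-n,n]$, zero outside $[-n-1,n+1]$, with total variation uniformly bounded in $n$. The functions $g_{m,n}:=\chi_n(g*\rho_m)$ lie in $C_c^\infty(\R)\subset V(\R)$, so $c+g_{m,n}\in C(\dR)\cap V(\R)$. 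Uniform convergence $c+g_{m,n}\to a$ along a diagonal sequence follows from the uniform continuity of $g$ and $g\in C_0(\R)$. For $M_{p_\theta(\cdot)}$-boundedness I would use two facts: (i) translation $b\mapsto b(\cdot-\eta)$ is an isometry on $M_{p_\theta(\cdot)}$, and the vector-valued identity $W^0(g*\rho_m)=\int_{\R}\rho_m(\eta)\,W^0(g(\cdot-\eta))\,d\eta$ combined with $\|\rho_m\|_{L^1(\R)}=1$ yields $\|g*\rho_m\|_{M_{p_\theta(\cdot)}}\le\|g\|_{M_{p_\theta(\cdot)}}$; (ii) Stechkin's inequality \eqref{eq:Stechkin} bounds $\|\chi_n\|_{M_{p_\theta(\cdot)}}$ uniformly in $n$, and $M_{p_\theta(\cdot)}$ is a Banach algebra under pointwise multiplication. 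The $C(\overline{\R})$ case is analogous: one first subtracts a fixed smooth step $\omega\in V(\R)$ with $\omega(-\infty)=0$, $\omega(+\infty)=1$, so that $a-c_-(1-\omega)-c_+\omega\in C_0(\R)\cap M_{p_\theta(\cdot)}$, and then regularizes this difference as above.

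The main obstacle I foresee is the multiplier-version of complex interpolation displayed above. While \cite{RS08} supplies the identification of $L^{p(\cdot)}(\R)$ as an interpolation space, extracting the operator-norm inequality for $W^0(a)$ with a constant independent of $a$ requires either a Calder\'on-type interpolation theorem in the variable exponent setting or a direct analytic-families argument, and both routes demand careful bookkeeping. A secondary technical point is justifying the vector-valued integral representation of $W^0(g*\rho_m)$ used in (i), which rests on the translation covariance of the Fourier multiplier construction together with a density argument in $L^{p_\theta(\cdot)}(\R)\cap L^2(\R)$.
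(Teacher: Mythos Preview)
Your proposal is correct and follows essentially the same route as the paper. The easy inclusions via Stechkin, the reduction to $C_0(\R)$ by subtracting a constant (respectively a $V(\R)$ function carrying the jump at infinity), the regularization by truncation and mollification, and the transfer of $L^\infty$-convergence to $M_{p(\cdot)}$-convergence via interpolation between $L^{p_\theta(\cdot)}(\R)$ and $L^2(\R)$ are exactly the paper's ingredients; the paper packages the $C_0$ step as a separate key lemma (Theorem~\ref{th:vanishing-multipliers}(a)) and then applies it. The two ``obstacles'' you flag are precisely what the paper isolates as preliminaries: the interpolation inequality you need is the Riesz--Thorin type Theorem~\ref{th:interpolation} (so you do not need to invoke complex interpolation from \cite{RS08} separately), and the bound $\|g*\rho_m\|_{M_{p_\theta(\cdot)}}\le\|g\|_{M_{p_\theta(\cdot)}}$ is Theorem~\ref{th:convolution-with-multiplier}, which the paper proves via \cite{KS-PLMS} rather than through the vector-valued integral you sketch---your modulation argument $W^0(a(\cdot-\eta))=M_{-\eta}W^0(a)M_{\eta}$ is correct and gives an alternative proof. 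The only cosmetic differences are that the paper truncates with piecewise-linear $\psi_n$ before mollifying, whereas you mollify first and then multiply by a smooth cutoff; both orders work.
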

%%%----------------------------------------------------------------------------

The collection of all functions of the form $c+Ff$ with $c\in\C$ and 
$f\in L^1(\R)$ is denoted by $W(\R)$. It is well known that $W(\R)$ is a
Banach algebra with respect to pointwise operations and the norm
\[
\|c+Ff\|_{W(\R)}:=|c|+\|f\|_{L^1(\R)}.
\]
The algebra $W(\R)$ is usually called the Wiener algebra.
By \cite[Theorem~2.1]{D79}, if $a\in W(\R)$ and $1<p<\infty$, then 
$\|a\|_{M_p}\le\|a\|_{W(\R)}$.

As a corollary of our results, the equality 
$C_p(\dR)=\operatorname{clos}_{M_p}\big(W(\R)\big)$
(which is implicit in \cite[Chap.~I, \S~3.2]{D79}) and a deep result by 
A.~Fig\`a-Talamanca and G.~Gaudry \cite[Theorem~A]{FG71}, we obtain the 
following results. In particular, they give the answers on the above 
mentioned questions posed in \cite{MSS14,MSS17}.
%%%----------------------------------------------------------------------------
\begin{corollary}\label{co:standard-Lp}
If $p\in(1,\infty)\setminus\{2\}$, then
\[
C_p(\dR)=\operatorname{clos}_{M_p}\big(W(\R)\big)=
C_{\langle p\rangle}(\dR)\subsetneqq C(\dR)\cap M_p,
\quad
C_p(\overline{\R})=
C_{\langle p\rangle}(\overline{\R}).
\]
\end{corollary}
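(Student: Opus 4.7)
My plan is to deduce Corollary \ref{co:standard-Lp} from Theorems \ref{th:first} and \ref{th:second}, together with the Fig\`a-Talamanca--Gaudry theorem \cite[Theorem~A]{FG71} and Duduchava's bound $\|a\|_{M_p}\le\|a\|_{W(\R)}$ from \cite[Theorem~2.1]{D79}. The first step is almost free: any constant exponent $p\in(1,\infty)\setminus\{2\}$ trivially satisfies \eqref{eq:log-Hoelder} and \eqref{eq:log-Hoelder-infinity}, so $p\in LH(\R)\subset\cB_M^*(\R)$, and Theorem \ref{th:second} immediately delivers both $C_p(\dR)=C_{\langle p\rangle}(\dR)$ and $C_p(\overline{\R})=C_{\langle p\rangle}(\overline{\R})$.

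The second step is to prove $C_p(\dR)=\operatorname{clos}_{M_p}(W(\R))$ directly. For the inclusion $\supset$, I would note that $\C\,{\bf{\dot{+}}}\,C_c^\infty(\R)\subset W(\R)$, since every $\varphi\in C_c^\infty(\R)\subset\mathcal{S}(\R)$ equals $F(F^{-1}\varphi)$ with $F^{-1}\varphi\in\mathcal{S}(\R)\subset L^1(\R)$; combining this with Theorem \ref{th:first} yields $C_p(\dR)=\operatorname{clos}_{M_p}(\C\,{\bf{\dot{+}}}\,C_c^\infty(\R))\subset\operatorname{clos}_{M_p}(W(\R))$. For the reverse inclusion, given $c+Ff\in W(\R)$, I would approximate $f$ in $L^1(\R)$ by $\varphi_n\in C_c^\infty(\R)$; since $F\varphi_n\in\mathcal{S}(\R)\subset V(\R)\cap C(\dR)$ and Duduchava's bound gives $\|Ff-F\varphi_n\|_{M_p}\le\|f-\varphi_n\|_{L^1}\to 0$, we obtain $Ff\in C_p(\dR)$, while $c\in\C\subset C(\dR)\cap V(\R)\subset C_p(\dR)$ is trivial.

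The third and genuinely delicate step is the strict inclusion $C_p(\dR)\subsetneqq C(\dR)\cap M_p$, which is where \cite[Theorem~A]{FG71} enters: it produces $g\in C_0(\R)\cap M_p$ with $g\notin m_p$, and I want to show such $g$ cannot lie in $C_p(\dR)$. Suppose it did; given $\eps>0$ pick $c+Ff\in W(\R)$ with $\|g-(c+Ff)\|_{M_p}<\eps$. Using the inequality $\|\cdot\|_{L^\infty}\le\|\cdot\|_{M_p}$ (which follows by Riesz--Thorin interpolation between the bounds on $L^p$ and $L^{p'}$ at $L^2$, coupled with Plancherel's identity $\|\cdot\|_{M_2}=\|\cdot\|_{L^\infty}$) and letting $x\to\pm\infty$ to exploit $g(\pm\infty)=(Ff)(\pm\infty)=0$, I would force $|c|<\eps$. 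Approximating $f$ once more in $L^1(\R)$ by some $\psi\in C_c^\infty(\R)\subset\mathcal{S}(\R)$ with $\|f-\psi\|_{L^1}<\eps$, I would obtain $\|g-F\psi\|_{M_p}<3\eps$ with $F\psi\in\mathcal{S}(\R)$, so $g\in m_p$, a contradiction. The main obstacle is precisely this two-stage approximation: I must first eliminate the constant part of the Wiener approximant (which is what requires the $L^\infty$-control on the $M_p$-norm) before the passage from $W(\R)$ to $\mathcal{S}(\R)$ can transfer the Fig\`a-Talamanca--Gaudry obstruction from $m_p$ to $C_p(\dR)$.
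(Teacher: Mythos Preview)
Your argument is correct, and its overall skeleton matches the paper's: invoke Theorem~\ref{th:second} for $C_p(\dR)=C_{\langle p\rangle}(\dR)$ and $C_p(\overline{\R})=C_{\langle p\rangle}(\overline{\R})$, use Theorem~\ref{th:first} plus $\C\,{\bf\dot{+}}\,C_c^\infty(\R)\subset W(\R)$ for one inclusion between $C_p(\dR)$ and $\operatorname{clos}_{M_p}(W(\R))$, and appeal to Fig\`a-Talamanca--Gaudry for the strictness.

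There are, however, two genuine differences in the sub-arguments. For the inclusion $W(\R)\subset C_p(\dR)$ the paper quotes \cite[Theorem~2.13]{D79}, which approximates any $a\in W(\R)$ in the $M_p$-norm by finite combinations of the M\"obius powers $((x-i)/(x+i))^k$, and then checks that such combinations lie in $C(\dR)\cap V(\R)$. Your route---approximate $f\in L^1(\R)$ by $\varphi_n\in C_c^\infty(\R)$, use $\|Ff-F\varphi_n\|_{M_p}\le\|f-\varphi_n\|_{L^1(\R)}$, and observe $F\varphi_n\in\mathcal{S}(\R)\subset C(\dR)\cap V(\R)$---is more elementary and entirely self-contained; it avoids importing the rational-approximation machinery from \cite{D79}. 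For the strict inclusion, the paper simply states that \cite[Theorem~A]{FG71} furnishes a function in $C(\dR)\cap M_p\setminus\operatorname{clos}_{M_p}(W(\R))$, leaving implicit the passage from ``not an $M_p$-limit of Fourier transforms of $L^1$ functions'' to ``not in $\operatorname{clos}_{M_p}(W(\R))$''. You spell this out via the constant-elimination step (using $\|\cdot\|_{L^\infty}\le\|\cdot\|_{M_p}$ and $g(\pm\infty)=0$), and then push one step further to land in $m_p$; this extra step is harmless but unnecessary, since the Fig\`a-Talamanca--Gaudry function is already excluded from $\operatorname{clos}_{M_p}(FL^1(\R))$, and your constant-elimination alone shows that a $C_0(\R)$-function in $\operatorname{clos}_{M_p}(W(\R))$ must lie in $\operatorname{clos}_{M_p}(FL^1(\R))$.
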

%%%----------------------------------------------------------------------------

The paper is organized as follows. We start preliminaries collected in
Section~\ref{sec:preliminaries} with the proof of the embedding 
$LH(\R)\subset\cB_M^*(\R)$. Further, we state the interpolation theorem of
the Riesz-Thorin type for variable exponent spaces and a remarkable result
by L.~Diening, which can rephrased on the interpolation language as follows:
for every variable Lebesgue space $L^{p(\cdot)}(\R)$ with 
$p(\cdot)\in\cB_M(\R)$ there exists a standard Lebesgue space $L^{p_0}(\R)$
with $1<p_0<\infty$ and a variable Lebesgue space $L^{p_\theta(\cdot)}(\R)$
with $p_\theta(\cdot)\in\cB_M(\R)$ and \textit{some} $\theta\in(0,1)$
such that $L^{p(\cdot)}(\R)$ is an interpolation space between 
$L^{p(\cdot)}(\R)$ and $L^{p_\theta(\cdot)}(\R)$. Note that in the definition 
of the class $\cB_M^*(\R)$, we require that $p_0=2$ and $p_\theta\in\cB_M(\R)$ 
for \textit{all} $\theta\in(0,\tau_{p(\cdot)})$. We conclude these 
preliminaries with two results, which follow from our work \cite{KS-PLMS}: 
the continuous embedding of $M_{p(\cdot)}$ into $L^\infty(\R)$ with the 
optimal embedding constant $1$ and the inequality 
$\|a*\varphi_\delta\|_{M_{p(\cdot)}}\le\|a\|_{M_{p(\cdot)}}$, where
$\varphi_\delta$ is a mollifier and $a\in M_{p(\cdot)}$.

In Section~\ref{sec:proofs}, we first prove the key theorem saying that a 
continuous function $a$ vanishing at infinity belongs to the closure of 
$C_c^\infty(\R)$ in the multiplier norm of $M_{p(\cdot)}$ if it satisfies 
the condition $a\in M_{\langle p(\cdot)\rangle}$ if $p(\cdot)\in\cB_M^*(\R)$
and $a\in V(\R)$ if $p(\cdot)\in\cB_M(\R)$. The proof of this result
follows ideas of L.~H\"ormander \cite[Theorem~1.16]{H60} and is based on 
the interpolation between $L^{p_\theta(\cdot)}(\R)$ and $L^2(\R)$. 
Armed with the above mentioned key theorem, we give proofs of our main 
results (Theorems~\ref{th:first} and \ref{th:second}) and its 
Corollary~\ref{co:standard-Lp}.

In Section~\ref{sec:final}, we discuss several problems related to our main 
results. First we observe that $C_{p(\cdot)}(\dR)\subset SO_{p(\cdot)}$,
where $SO_{p(\cdot)}$ is an algebra of slowly oscillating at infinity
Fourier multipliers.
We show that $\operatorname{clos}_{M_{p(\cdot)}}\big(V(\R)\big)=
\operatorname{clos}_{M_{p(\cdot)}}\big(P\C^0\big)$ if $p(\cdot)\in\cB_M(\R)$, 
where $P\C^0$ is the set of all piecewise constant functions with finite 
number of jumps. This closure is denoted by $PC_{p(\cdot)}$ because 
$V(\R)\subset PC$, where $PC$ is the set of all piecewise continuous 
functions (having at most countable set of jumps). It is natural to 
conjecture that $PC_{p(\cdot)}$ coincides with the closure of 
$PC\cap M_{\langle p(\cdot)\rangle}$ if $p(\cdot)\in\cB_M^*(\R)$.
We have been able only to confirm a weaker version of this conjecture: 
$PC_{p(\cdot)}$ coincides with the closure of 
$PC^0\cap M_{\langle p(\cdot)\rangle}$ if $p(\cdot)\in\cB_M^*(\R)$, where 
$PC^0$ stands for the set of all functions in $PC$ with finite sets of jumps. 
Finally, we shortly discuss a gap in the proof of the embedding
$C(\overline{\R})\cap\operatorname{clos}_{M_p}\big(M_{\langle p\rangle}\big)
\subset C_p(\overline{\R})$ in \cite[Lemma~3.1(ii)]{KS94}, which we have not
been able to fill in, and state as an open problem whether the classes 
$\cB_M^*(\R)$ and $\cB_M(\R)$ coincide.
%%%----------------------------------------------------------------------------
\section{Preliminaries}\label{sec:preliminaries}\label{sec:proofs}
\subsection{The class $LH(\R)$ is contained in $\cB_M(\R)$}
The following lemma is implicit in \cite{RS08}.
%%%----------------------------------------------------------------------------
\begin{lemma}\label{le:log-Hoelder}
We have $LH(\R)\subset\cB_M^*(\R)$.
\end{lemma}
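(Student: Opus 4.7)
The plan is to show that every globally log-Hölder continuous exponent $p(\cdot)$ satisfies the stronger condition defining $\cB_M^*(\R)$, namely that $p_\theta(\cdot)\in\cB_M(\R)$ for all sufficiently small $\theta$. The strategy is to choose $\tau_{p(\cdot)}:=\theta_{p(\cdot)}$ and show that the \emph{class} $LH(\R)$ is preserved by the transformation $p(\cdot)\mapsto p_\theta(\cdot)$, so that the Cruz-Uribe--Fiorenza--Neugebauer result (\cite{CFN03,CFN04}) mentioned in the introduction applies uniformly.

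First, I would invert the defining relation \eqref{eq:p-theta} algebraically to obtain the explicit expression
\[
p_\theta(x)=\frac{2(1-\theta)p(x)}{2-\theta p(x)},\qquad x\in\R,
\]
valid whenever $\theta<2/p_+$. From this formula a direct computation gives
\[
p_\theta(x)-p_\theta(y)
=
\frac{4(1-\theta)\bigl(p(x)-p(y)\bigr)}{\bigl(2-\theta p(x)\bigr)\bigl(2-\theta p(y)\bigr)},
\]
so that for every $\theta\in(0,\theta_{p(\cdot)})$ there exists a constant $K=K(\theta,p_+)$ with $|p_\theta(x)-p_\theta(y)|\le K\,|p(x)-p(y)|$ for all $x,y\in\R$. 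Applying this to the pointwise log-Hölder bounds \eqref{eq:log-Hoelder} and, with $p_\infty$ replaced by $(p_\theta)_\infty:=\frac{2(1-\theta)p_\infty}{2-\theta p_\infty}$, to the decay bound \eqref{eq:log-Hoelder-infinity}, shows that $p_\theta(\cdot)\in LH(\R)$.

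Second, I would check that $p_\theta(\cdot)\in\cP(\R)$ on the whole range $\theta\in(0,\theta_{p(\cdot)})$. Since $t\mapsto\frac{2(1-\theta)t}{2-\theta t}$ is strictly increasing on $[1,2/\theta)$, one has $(p_\theta)_+=\frac{2(1-\theta)p_+}{2-\theta p_+}<\infty$ as soon as $\theta<2/p_+$, and an elementary rearrangement shows that $(p_\theta)_->1$ is equivalent to $\theta<2-2/p_-$. The remaining constraint $\theta<1$ is obviously satisfied. Both bounds are encoded in $\theta_{p(\cdot)}$ defined by \eqref{eq:range-theta}; this is precisely the observation already recorded in \cite[Corollary~2.3]{RS08}.

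Combining the two steps, for every $\theta\in(0,\theta_{p(\cdot)})$ the exponent $p_\theta(\cdot)$ lies in $LH(\R)$, hence in $\cB_M(\R)$ by \cite{CFN03,CFN04} (equivalently \cite[Theorem~3.16]{CF13}). Taking $\tau_{p(\cdot)}:=\theta_{p(\cdot)}$ therefore shows $p(\cdot)\in\cB_M^*(\R)$, which yields $LH(\R)\subset\cB_M^*(\R)$ as claimed. There is no essential obstacle here; the only subtlety is bookkeeping the dependence of the log-Hölder constant on $\theta$ uniformly on a fixed interval $(0,\tau_{p(\cdot)})$, which the explicit formula above handles automatically.
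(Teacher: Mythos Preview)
Your proposal is correct and follows essentially the same route as the paper: invert \eqref{eq:p-theta} to obtain the explicit formula $p_\theta(x)=\dfrac{2(1-\theta)p(x)}{2-\theta p(x)}$, use the identity $p_\theta(x)-p_\theta(y)=\dfrac{4(1-\theta)(p(x)-p(y))}{(2-\theta p(x))(2-\theta p(y))}$ together with the lower bound $2-\theta p(x)\ge 2-\theta p_+>0$ to transfer both log-H\"older conditions to $p_\theta(\cdot)$ (with $(p_\theta)_\infty$ defined exactly as you wrote), cite \cite[Corollary~2.3]{RS08} for $p_\theta(\cdot)\in\cP(\R)$, and conclude via the Cruz-Uribe--Fiorenza--Neugebauer theorem. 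The only cosmetic difference is that the paper writes out the explicit constant $K=\dfrac{4(1-\theta)}{(2-\theta p_+)^2}$ rather than leaving it as $K(\theta,p_+)$.
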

%%%----------------------------------------------------------------------------
\begin{proof}
Let $p(\cdot)\in LH(\R)$. Then $p(\cdot)\in\cP(\R)$ and 
\eqref{eq:log-Hoelder}--\eqref{eq:log-Hoelder-infinity} are fulfilled. For
$\theta\in(0,\theta_{p(\cdot)})$, let $p_\theta(\cdot)$ be defined by
\eqref{eq:p-theta}. Then
%%%
\begin{equation}\label{eq:log-Hoelder-1}
p_\theta(x)=\frac{2(1-\theta)p(x)}{2-\theta p(x)},
\quad
x\in\R.
\end{equation}
%%%
By \cite[Corollary~2.3]{RS08}, $p_\theta(\cdot)\in\cP(\R)$. Put
%%%
\begin{equation}\label{eq:log-Hoelder-2}
(p_\theta)_\infty:=\frac{2(1-\theta)p_\infty}{2-\theta p_\infty},
\end{equation}
%%%
where $p_\infty$ is the constant from condition \eqref{eq:log-Hoelder-infinity}.
It follows from \eqref{eq:log-Hoelder-infinity} that
\[
p_\infty=\lim_{x\to\infty}p(x),
\]
whence $p_\infty\in[p_-,p_+]$. Therefore, taking into account 
\eqref{eq:range-theta}, we see that for $x\in\R$,
%%%
\begin{equation}\label{eq:log-Hoelder-3}
2-\theta p(x)\ge 2-\theta p_+>0,
\quad
2-\theta p_\infty\ge 2-\theta p_+>0.
\end{equation}
%%%
In view of \eqref{eq:log-Hoelder-1}--\eqref{eq:log-Hoelder-3}, we obtain for
$x,y\in\R$,
%%%
\begin{align}
\label{eq:log-Hoelder-4}
|p_\theta(x)-p_\theta(y)|
&=
\left|\frac{4(1-\theta)(p(x)-p(y) )}{(2-\theta p(x))(2-\theta p(y))}\right|
\le 
\frac{4(1-\theta)}{(2-\theta p_+)^2}|p(x)-p(y)|,
\\
\label{eq:log-Hoelder-5}
|p_\theta(x)-(p_\theta)_\infty|
&=
\left|
\frac{4(1-\theta)(p(x)-p_\infty)}{(2-\theta p(x))(2-\theta p_\infty)}
\right|
\le 
\frac{4(1-\theta)}{(2-\theta p_+)^2}|p(x)-p_\infty|.
\end{align}
%%%
It follows from \eqref{eq:log-Hoelder}--\eqref{eq:log-Hoelder-infinity}
and \eqref{eq:log-Hoelder-4}--\eqref{eq:log-Hoelder-5} that
$p_\theta(\cdot)\in LH(\R)$. Therefore $p_\theta(\cdot)\in\cB_M(\R)$
for every $\theta\in(0,\theta_{p(\cdot)})$. Thus $p(\cdot)\in\cB_M^*(\R)$.
\qed
\end{proof}
%%%----------------------------------------------------------------------------
\subsection{Interpolation in variable Lebesgue spaces}
One of our main tools is the following interpolation theorem of the 
Riesz-Thorin type  for variable Lebesgue spaces (see 
\cite[Corollary~7.1.4]{DHHR11} and also \cite[Theorem~14.16]{M83}).
%%%----------------------------------------------------------------------------
\begin{theorem}\label{th:interpolation}
Let $p_j(\cdot):\R\to[1,\infty]$, $j=0,1$, be a.e. finite measurable functions 
and let $p_\vartheta(\cdot):\R\to[1,\infty]$ be defined for 
$\vartheta\in(0,1)$ by
\[
\frac{1}{p_\vartheta(x)}=\frac{\vartheta}{p_0(x)}+\frac{1-\vartheta}{p_1(x)},
\quad
x\in\R.
\]
Suppose $A$ is a linear operator defined on 
$L^{p_0(\cdot)}(\R)+L^{p_1(\cdot)}(\R)$.
If $A\in\cB(L^{p_j(\cdot)}(\R))$ for $j=0,1$, then
$A\in\cB(L^{p_\vartheta(\cdot)}(\R))$ for all $\vartheta\in(0,1)$ and
%%%
\begin{equation}\label{eq:Riesz-Thorin}
\|A\|_{\cB(L^{p_\vartheta(\cdot)}(\R))} \le 4
\|A\|_{\cB(L^{p_0(\cdot)}(\R))}^\vartheta
\|A\|_{\cB(L^{p_1(\cdot)}(\R))}^{1-\vartheta}.
\end{equation}
\end{theorem}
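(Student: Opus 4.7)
The plan is to adapt the Stein--Calder\'on complex interpolation method to the variable exponent setting, tracking carefully the constants coming from variable-exponent versions of H\"older's inequality and of the K\"othe-dual representation of the norm, since these are precisely the source of the factor $4$ in \eqref{eq:Riesz-Thorin}. First I would reduce to proving the pairing estimate $\bigl|\int_\R(Af)\,g\,dx\bigr|\le \kappa\,\|A\|_{\cB(L^{p_0(\cdot)}(\R))}^{\vartheta}\,\|A\|_{\cB(L^{p_1(\cdot)}(\R))}^{1-\vartheta}$ for bounded, compactly supported simple functions $f,g$ with $\|f\|_{p_\vartheta(\cdot)}\le 1$ and $\|g\|_{p_\vartheta'(\cdot)}\le 1$. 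Such $f$ and $g$ are dense in the respective spaces (under the standard boundedness assumption on the exponents), and the passage from $\|Af\|_{p_\vartheta(\cdot)}$ back to the pairing loses a factor of at most $2$ via the variable-exponent K\"othe duality.

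For $z$ in the closed strip $S=\{z\in\C:0\le\operatorname{Re} z\le 1\}$ I would define the complex exponents
\[
\frac{1}{p_z(x)}=\frac{z}{p_0(x)}+\frac{1-z}{p_1(x)},\qquad \frac{1}{p_z'(x)}=1-\frac{1}{p_z(x)},
\]
and, writing $f=\sum_k a_k\chi_{E_k}$ and $g=\sum_j b_j\chi_{F_j}$, the analytic families
\[
f_z(x):=\sum_k |a_k|^{p_\vartheta(x)/p_z(x)}\operatorname{sgn}(a_k)\chi_{E_k}(x),\qquad g_z(x):=\sum_j |b_j|^{p_\vartheta'(x)/p_z'(x)}\operatorname{sgn}(b_j)\chi_{F_j}(x).
\]
By construction $f_\vartheta=f$ and $g_\vartheta=g$, and a direct modular computation gives $I_{p_1(\cdot)}(f_z)=I_{p_\vartheta(\cdot)}(f)\le 1$ on $\operatorname{Re} z=0$ and $I_{p_0(\cdot)}(f_z)=I_{p_\vartheta(\cdot)}(f)\le 1$ on $\operatorname{Re} z=1$, and symmetrically for $g_z$ against the conjugate exponents. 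Setting $F(z):=\int_\R(Af_z)(x)\,g_z(x)\,dx$, the linearity of $A$ together with the fact that $f_z$ and $g_z$ are finite sums of fixed indicator functions weighted by scalar holomorphic functions of $z$ shows that $F$ is holomorphic in the interior of $S$ and continuous and bounded on $\overline S$.

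On the boundary lines, the hypotheses $A\in\cB(L^{p_j(\cdot)}(\R))$ combined with the variable-exponent H\"older inequality (which loses a factor of at most $2$) give $|F(z)|\le 2\|A\|_{\cB(L^{p_1(\cdot)}(\R))}$ on $\operatorname{Re} z=0$ and $|F(z)|\le 2\|A\|_{\cB(L^{p_0(\cdot)}(\R))}$ on $\operatorname{Re} z=1$. Hadamard's three-lines theorem then yields
\[
|F(\vartheta)|\le 2\,\|A\|_{\cB(L^{p_0(\cdot)}(\R))}^{\vartheta}\,\|A\|_{\cB(L^{p_1(\cdot)}(\R))}^{1-\vartheta},
\]
and taking the supremum over admissible $g$ and applying the factor-$2$ K\"othe-dual inequality from the first step produces \eqref{eq:Riesz-Thorin}, the overall constant $4$ being the product of the two factor-$2$ losses.

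The delicate step will be verifying the holomorphy and uniform boundedness of $F$ on the strip, because $p_\vartheta(x)/p_z(x)$ depends on $x$ and the pointwise moduli $|a_k|^{\operatorname{Re}(p_\vartheta(x)/p_z(x))}$ need not be controlled uniformly in $x$ as $z$ ranges over $S$. This is handled in the standard way by pre-normalising the test functions so that $|a_k|,|b_j|\le 1$ and noting that, since $f,g$ are supported on sets of finite measure and take finitely many values, $\{f_z:z\in S\}$ is a bounded holomorphic family with values in $L^2(\R)\cap L^\infty(\R)$; on this family the two bounded extensions of $A$ act consistently through the hypothesis that $A$ is defined on all of $L^{p_0(\cdot)}(\R)+L^{p_1(\cdot)}(\R)$. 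Once this technicality is resolved, the remainder is a direct transcription of the classical Stein--Calder\'on proof to the variable-exponent framework.
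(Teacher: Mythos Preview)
The paper does not prove Theorem~\ref{th:interpolation}; it is quoted without proof as a known result, with references to \cite[Corollary~7.1.4]{DHHR11} and \cite[Theorem~14.16]{M83}. Your outline is precisely the argument found in those references: the Stein--Calder\'on three-lines method, with the constant $4$ arising as the product of the factor-$2$ constants in the variable-exponent H\"older inequality and in the associate-norm representation. So there is nothing to compare against in the paper itself, and your approach is the standard one.

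One technical point deserves tightening. Your claim that ``$f_z$ and $g_z$ are finite sums of fixed indicator functions weighted by scalar holomorphic functions of $z$'' is not literally true in the variable-exponent case, because the exponent $p_\vartheta(x)/p_z(x)$ depends on $x$ as well as on $z$; you rightly flag this in your final paragraph. However, the remedy you propose---pre-normalising to $|a_k|,|b_j|\le 1$ and viewing $\{f_z\}$ as a bounded holomorphic family in $L^2(\R)\cap L^\infty(\R)$---is not quite the correct mechanism, since nothing in the hypotheses places $f_z$ in $L^2$. The usual route (as in \cite{DHHR11}) is more direct: since $f_z$ and $g_z$ are supported on a fixed set of finite measure and are pointwise holomorphic in $z$ with a locally uniform (in $z$) bound of the form $\max(1,|f(x)|)^{C}$, one checks holomorphy and boundedness of $F(z)$ by Morera's theorem and Fubini (or by differentiating under the integral sign with dominated convergence), using that $A$ maps such uniformly bounded, fixed-support families into $L^{p_0(\cdot)}+L^{p_1(\cdot)}$ continuously. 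This is routine and does not affect the validity of your outline.
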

%%%----------------------------------------------------------------------------

If $p_j$, $j=0,1$, are constant, then the above result is the classical
Riesz-Thorin  interpolation theorem, and inequality \eqref{eq:Riesz-Thorin}
holds with the interpolation constant $1$ in the place of $4$.

The above interpolation theorem will be used in the case when $p_0(\cdot)$
is either equal to $2$ (that is, in the case $p(\cdot)\in\cB_M^*(\R)$) or is 
a  constant $p_0\in(1,\infty)$ determined by the following property of
$p(\cdot)\in\cB_M(\R)$, which was communicated to the authors of \cite{KS13} 
by L. Diening.
%%%----------------------------------------------------------------------------
\begin{theorem}[{\cite[Theorem~4.1]{KS13}}]
\label{th:Deining-interpolation}
If $p(\cdot)\in\cB_M(\R)$, then there exist two constants $p_0\in(1,\infty)$ and
$\theta\in(0,1)$, and a variable exponent $p_\theta(\cdot)\in\cB_M(\R)$ such 
that
%%%
\begin{equation}\label{eq:Diening-interpolation}
\frac{1}{p(x)}=\frac{\theta}{p_0}+\frac{1-\theta}{p_\theta(x)},
\quad x\in\R.
\end{equation}
\end{theorem}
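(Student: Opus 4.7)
The plan is to realize $p(\cdot)$ as a Riesz--Thorin-type interpolation exponent between a constant $p_0$ and a nearby variable exponent $p_\theta(\cdot)$. The construction is elementary; the decisive input is Diening's self-improvement (``open property'') of the class $\cB_M(\R)$.

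I would start by fixing any $p_0\in(1,p_-)$ (which exists since $p_->1$) and, for a parameter $\theta\in(0,1)$ to be chosen small, defining
\[
\frac{1}{p_\theta(x)}:=\frac{1}{1-\theta}\left(\frac{1}{p(x)}-\frac{\theta}{p_0}\right),\quad x\in\R,
\]
so that \eqref{eq:Diening-interpolation} holds by construction. Using the bounds $1<p_0<p_-\le p(x)\le p_+<\infty$, a direct estimate shows that for $\theta>0$ small enough one has $p_\theta(\cdot)\in\cP(\R)$, and moreover
\[
\left\|\frac{1}{p_\theta}-\frac{1}{p}\right\|_{L^\infty(\R)}=\frac{\theta}{1-\theta}\left\|\frac{1}{p}-\frac{1}{p_0}\right\|_{L^\infty(\R)}\to 0\quad\text{as}\quad\theta\to 0^+,
\]
so that $p_\theta(\cdot)$ is a uniformly small perturbation of $p(\cdot)$ in the reciprocal scale.

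The heart of the argument is then to invoke Diening's openness of $\cB_M(\R)$ (a consequence of his $A_\infty$-type / $K$-condition characterization and its self-improvement; see \cite{D04} and \cite[Chapter~5]{DHHR11}): if $p(\cdot)\in\cB_M(\R)$, then every $q(\cdot)\in\cP(\R)$ with $\|1/q-1/p\|_{L^\infty(\R)}$ sufficiently small belongs again to $\cB_M(\R)$. Applied to $q=p_\theta$ for small enough $\theta$, this yields $p_\theta(\cdot)\in\cB_M(\R)$ and completes the proof.

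The main obstacle is precisely this last step: the openness of $\cB_M(\R)$ is \emph{not} a soft consequence of the Riesz--Thorin interpolation of Theorem~\ref{th:interpolation} alone, which would require $\cB_M$-membership of \emph{both} endpoints, and genuinely relies on Diening's deep self-improvement. If only the standard form $p(\cdot)\in\cB_M(\R)\Leftrightarrow(p(\cdot)/s)'\in\cB_M(\R)$ for some $s>1$ is available, then a short algebraic reparameterization is required to rewrite it in the convex-combination form \eqref{eq:Diening-interpolation}; the $L^\infty$-openness can alternatively be read off from the quantitative self-improvement of the associated $A_\infty$-type constant.
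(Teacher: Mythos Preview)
The paper does not supply its own proof of this statement: it is quoted from \cite[Theorem~4.1]{KS13} (note the attribution in the theorem heading) and used as a black box, so there is nothing in the present paper to compare your argument against.

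For what it is worth, your sketch is the standard route and is in the spirit of the argument in \cite{KS13}: perturb $1/p(\cdot)$ linearly toward a constant $1/p_0$, verify by elementary bounds that $p_\theta(\cdot)\in\cP(\R)$ for small $\theta$, and then appeal to Diening's self-improvement of $\cB_M(\R)$ to place $p_\theta(\cdot)$ in $\cB_M(\R)$. Your identification of this last step as the only nontrivial ingredient is correct, and your closing caveat is appropriate: the formulation one actually finds in \cite[Chapter~5]{DHHR11} is the left-openness of $\cB_M(\R)$ (existence of $s>1$ with $p(\cdot)/s\in\cB_M(\R)$, together with duality $p(\cdot)\in\cB_M(\R)\Leftrightarrow p'(\cdot)\in\cB_M(\R)$), rather than a literal $L^\infty$-neighbourhood statement on $1/p$; passing from that form to the convex-combination identity \eqref{eq:Diening-interpolation} with a suitable constant $p_0$ is indeed a short algebraic manipulation, but it does need to be written out rather than asserted.
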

%%%----------------------------------------------------------------------------
\subsection{Banach algebra of the Fourier multipliers on variable Lebesgue 
spaces}
The results of this subsection allow us to extend techniques working
for Fourier multipliers in the case of standard Lebesgue spaces, which are 
invariant under reflection and translations, to the case of variable Lebesgue 
spaces, which are neither reflection-invariant nor translation-invariant.
%%%----------------------------------------------------------------------------
\begin{theorem}\label{th:continuous-embedding-of-multipliers}
If $p(\cdot)\in\cB_M(\R)$, then for every $a\in M_{p(\cdot)}(\R)$,
%%%
\begin{equation}\label{eq:continuous-embedding}
\|a\|_{L^\infty(\R)}\le\|a\|_{M_{p(\cdot)}},
\end{equation}
%%%
and the constant $1$ on the right-hand side is best possible.
\end{theorem}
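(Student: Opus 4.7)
I would decompose the argument into three stages.

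First, I would reduce to the case of continuous symbols. The mollifier inequality $\|a*\varphi_\delta\|_{M_{p(\cdot)}}\le\|a\|_{M_{p(\cdot)}}$ mentioned in the preliminaries shows that, granting \eqref{eq:continuous-embedding} for the continuous symbol $a*\varphi_\delta$, one obtains $\|a*\varphi_\delta\|_{L^\infty(\R)}\le\|a\|_{M_{p(\cdot)}}$; letting $\delta\to 0$ and using pointwise convergence at Lebesgue points of $a$ (together with $\|a*\varphi_\delta\|_{L^\infty(\R)}\le\|a\|_{L^\infty(\R)}$) recovers the inequality for general $a\in M_{p(\cdot)}$.

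Second, I would observe translation invariance of the multiplier norm. Although $L^{p(\cdot)}(\R)$ itself is not translation invariant, the modulation $(\mathcal{E}_\lambda f)(t):=e^{i\lambda t}f(t)$ acts as a pointwise isometry on $L^{p(\cdot)}(\R)$ for every $\lambda\in\R$. A direct calculation with the paper's Fourier convention yields the intertwining
\[
W^0(a)\,\mathcal{E}_\lambda \;=\; \mathcal{E}_\lambda\,W^0\bigl(a(\cdot-\lambda)\bigr),
\]
whence $\|a(\cdot-\lambda)\|_{M_{p(\cdot)}}=\|a\|_{M_{p(\cdot)}}$ for every $\lambda\in\R$. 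It therefore suffices to prove $|a(0)|\le\|a\|_{M_{p(\cdot)}}$ for $a$ continuous at the origin.

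Third, I would test against a family of frequency-localised functions. Fix $\phi\in C_c^\infty(\R)$ with $\phi\ge 0$, $\operatorname{supp}\phi\subset[-1,1]$ and $\phi(0)>0$, and for $\eta>0$ set $\phi_\eta(x):=\phi(x/\eta)$ and $f_\eta:=F^{-1}\phi_\eta\in\mathcal{S}(\R)\subset L^{p(\cdot)}(\R)$, so that $Ff_\eta$ is concentrated in $[-\eta,\eta]$. A change of variables $x=\eta y$ gives
\[
f_\eta(t) \;=\; \eta(F^{-1}\phi)(\eta t),
\qquad
(W^0(a)f_\eta-a(0)f_\eta)(t) \;=\; \eta\bigl(F^{-1}(b_\eta\phi)\bigr)(\eta t),
\]
where $b_\eta(y):=a(\eta y)-a(0)\to 0$ pointwise by continuity of $a$ at $0$. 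Thus, on the unscaled side, the error $F^{-1}(b_\eta\phi)$ tends to $0$ uniformly in $t$ while $F^{-1}\phi$ remains fixed and nontrivial.

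The hard part will be transferring this scale-frozen smallness into the variable-exponent norm: since $L^{p(\cdot)}(\R)$ is not dilation-homogeneous, the ratio $\|W^0(a)f_\eta-a(0)f_\eta\|_{p(\cdot)}/\|f_\eta\|_{p(\cdot)}$ cannot be collapsed by a single scalar, as happens in the classical $L^p$ case where both sides acquire the same factor $\eta^{1-1/p}$. This is precisely the technical core of \cite{KS-PLMS}; a natural route is to localise the comparison to a ball of radius $O(1/\eta)$ on which $p(\cdot)$ is essentially constant, and then apply Theorem~\ref{th:Deining-interpolation} to reduce to a constant-exponent estimate. Once this step is carried out, letting $\eta\to 0$ yields $|a(0)|\le\|a\|_{M_{p(\cdot)}}$. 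Optimality of the constant $1$ is immediate: for $a\equiv c\in\C$ one has $W^0(a)=cI$, so $\|a\|_{M_{p(\cdot)}}=|c|=\|a\|_{L^\infty(\R)}$.
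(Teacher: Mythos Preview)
Your Stages~1 and~2 are sound in themselves (the modulation identity is correct and yields $\|a(\cdot-\lambda)\|_{M_{p(\cdot)}}=\|a\|_{M_{p(\cdot)}}$), but note a logical ordering issue: in this paper the mollifier inequality is Theorem~\ref{th:convolution-with-multiplier}, proved \emph{after} the present theorem, and its proof explicitly invokes the verification carried out here. You could sidestep this by deriving the mollifier bound directly from your Stage~2 identity via a Bochner-integral representation of $W^0(a*\varphi_\delta)$, but as written Stage~1 is circular.

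The genuine gap is Stage~3, which you yourself flag as ``the hard part'' and do not carry out. Your proposed route---localise to a ball of radius $O(1/\eta)$ on which $p(\cdot)$ is ``essentially constant''---cannot work under the sole hypothesis $p(\cdot)\in\cB_M(\R)$: as the paper recalls, Lerner's examples give exponents in $\cB_M(\R)$ that are discontinuous at a finite point or fail to stabilise at infinity, so there need be no ball, large or small, on which $p(\cdot)$ is nearly constant. Invoking Theorem~\ref{th:Deining-interpolation} does not resolve this either, since it produces a single variable exponent $p_\theta(\cdot)\in\cB_M(\R)$ for which the same dilation obstacle recurs. The paper's proof takes a different route altogether: it isolates the uniform interval condition
\[
C_{p(\cdot)}:=\sup_{-\infty<a<b<\infty}\frac{1}{b-a}\,\|\chi_{(a,b)}\|_{L^{p(\cdot)}(\R)}\|\chi_{(a,b)}\|_{L^{p'(\cdot)}(\R)}<\infty,
\]
verifies it from the boundedness of the Cauchy singular integral $S$ on $L^{p(\cdot)}(\R)$ (which holds for $p(\cdot)\in\cB_M(\R)$ because $S$ is Calder\'on-Zygmund), and then cites \cite[inequality~(1.2) and Corollary~4.2]{KS-PLMS} for the implication from $C_{p(\cdot)}<\infty$ to \eqref{eq:continuous-embedding} with optimal constant~$1$. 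It is this uniform estimate on norms of characteristic functions of intervals---not any local constancy of $p(\cdot)$---that substitutes for dilation homogeneity.
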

%%%----------------------------------------------------------------------------
\begin{proof}
It is well known that the Cauchy singular integral operator $S$ 
is a Calder\'on-Zygmund operator. Applying \cite[Corollary~6.3.10]{DHHR11}
to the operator $S$, we conclude that the operator $S$ is bounded on the
variable Lebesgue space $L^{p(\cdot)}(\R)$. Then, in view of 
\cite[Theorem~3.9]{KS14},
%%%
\begin{equation}\label{eq:AX}
C_{p(\cdot)}:=\sup_{-\infty<a<b<\infty}
\frac{1}{b-a}\|\chi_{(a,b)}\|_{L^{p(\cdot)}(\R)}
\|\chi_{(a,b)}\|_{L^{p'(\cdot)}(\R)}<\infty,
\end{equation}
%%%
where $1/p(x)+1/p'(x)=1$ for $x\in\R$.
If \eqref{eq:AX} is fulfilled, then inequality \eqref{eq:continuous-embedding}
follows from \cite[inequality (1.2) and Corollary~4.2]{KS-PLMS}.
\qed
\end{proof}
%%%----------------------------------------------------------------------------

Inequality \eqref{eq:continuous-embedding} was established in
\cite[Theorem~1]{K15b} with the constant 
\[
\|S\|_{\cB(L^{p(\cdot)}(\R))}C_{p(\cdot)}\ge 1
\]
in the place of the optimal constant $1$ on the right-hand side of 
\eqref{eq:continuous-embedding}. We have to report that the formulation
of that theorem contains an inaccuracy (the constant $C_{p(\cdot)}$ is written 
in the denominator there, as well as, in the end of the proof of \cite{K15b},
starting from inequality (2.8)).
%%%----------------------------------------------------------------------------
\begin{corollary}
If $p(\cdot)\in\cB_M(\R)$, then $M_{p(\cdot)}$ is a Banach algebra under
pointwise operations and the norm given by \eqref{eq:multiplier-norm}.
\end{corollary}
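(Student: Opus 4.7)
The preceding discussion already establishes that $M_{p(\cdot)}$ is a unital normed algebra under pointwise operations with the multiplier norm (submultiplicativity follows from $W^0(ab)=W^0(a)W^0(b)$ on the dense set $L^2(\R)\cap L^{p(\cdot)}(\R)$). So the only things that remain are to verify positive-definiteness of the norm and completeness. Positive-definiteness is immediate from Theorem~\ref{th:continuous-embedding-of-multipliers}: if $\|a\|_{M_{p(\cdot)}}=0$, then $\|a\|_{L^\infty(\R)}\le\|a\|_{M_{p(\cdot)}}=0$, and hence $a=0$ almost everywhere.

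For completeness, I would take a Cauchy sequence $(a_n)$ in $M_{p(\cdot)}$. Then $(W^0(a_n))$ is Cauchy in the Banach space $\cB(L^{p(\cdot)}(\R))$, so $W^0(a_n)\to A$ in operator norm for some $A\in\cB(L^{p(\cdot)}(\R))$. Simultaneously, Theorem~\ref{th:continuous-embedding-of-multipliers} implies that $(a_n)$ is Cauchy in $L^\infty(\R)$, and therefore $a_n\to a$ in $L^\infty(\R)$ for some $a\in L^\infty(\R)$.

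The key step is to identify $A$ with $W^0(a)$. For $f\in L^2(\R)\cap L^{p(\cdot)}(\R)$ one has $Ff\in L^2(\R)$, and
\[
\|a_nFf-aFf\|_{L^2(\R)}\le\|a_n-a\|_{L^\infty(\R)}\|Ff\|_{L^2(\R)}\to 0,
\]
so $W^0(a_n)f=F^{-1}a_nFf\to F^{-1}aFf$ in $L^2(\R)$. On the other hand, $W^0(a_n)f\to Af$ in $L^{p(\cdot)}(\R)$. Extracting a subsequence along which both convergences hold almost everywhere, the two limits must coincide pointwise a.e., so $Af=F^{-1}aFf$ for every $f$ in the dense subspace $L^2(\R)\cap L^{p(\cdot)}(\R)$. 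Thus $W^0(a)$ admits a bounded extension to $L^{p(\cdot)}(\R)$, namely $A$; consequently $a\in M_{p(\cdot)}$ and $\|a_n-a\|_{M_{p(\cdot)}}=\|W^0(a_n)-A\|_{\cB(L^{p(\cdot)}(\R))}\to 0$.

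The main subtlety is precisely the identification of the limit: the convergence in the multiplier norm produces a bounded operator on $L^{p(\cdot)}(\R)$, whereas the defining formula $W^0(a)=F^{-1}aF$ lives naturally on $L^2(\R)$, and these two topologies must be reconciled on the dense intersection $L^2(\R)\cap L^{p(\cdot)}(\R)$. Since $L^{p(\cdot)}(\R)$ and $L^2(\R)$ are in general incomparable, this reconciliation relies on Theorem~\ref{th:continuous-embedding-of-multipliers} (to transport Cauchyness to $L^\infty$) together with the standard device of passing to an almost-everywhere convergent subsequence.
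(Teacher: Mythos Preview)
Your argument is correct and is precisely the standard proof the paper invokes by reference to \cite[Proposition~2.5.13]{G14} and \cite[Corollary~1]{K15b}: use Theorem~\ref{th:continuous-embedding-of-multipliers} to pass a Cauchy sequence in $M_{p(\cdot)}$ to an $L^\infty$-limit $a$, use completeness of $\cB(L^{p(\cdot)}(\R))$ to get an operator limit $A$, and identify $A=W^0(a)$ on the dense set $L^2(\R)\cap L^{p(\cdot)}(\R)$ via an a.e.\ convergent subsequence. The only point you leave implicit is that $Af=F^{-1}aFf$ lies in $L^2(\R)\cap L^{p(\cdot)}(\R)$ (so that the definition of $M_{p(\cdot)}$ is met), but this is immediate since $Af\in L^{p(\cdot)}(\R)$ and $F^{-1}aFf\in L^2(\R)$ whenever $a\in L^\infty(\R)$ and $f\in L^2(\R)$.
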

%%%---------------------------------------------------------------------------

Since \eqref{eq:continuous-embedding} is available, the above result can be
proved by an easy adaptation of the arguments from the proof of 
\cite[Proposition~2.5.13]{G14} (we refer to the proof of 
\cite[Corollary~1]{K15b} for the details).
%%%----------------------------------------------------------------------------
\begin{theorem}\label{th:convolution-with-multiplier}
Suppose that a non-negative even function $\varphi\in C_c^\infty(\R)$ satisfies 
the condition
%%%
\begin{equation}\label{eq:convolution-with-multiplier-1}
\int_\R\varphi(x)\,dx=1
\end{equation}
%%%
and the function $\varphi_\delta$ is defined for $\delta>0$ by 
%%%
\begin{equation}\label{eq:convolution-with-multiplier-2}
\varphi_\delta(x):=\delta^{-1}\varphi(x/\delta),
\quad x\in\R,
\quad \delta>0,
\end{equation}
%%%
If $p(\cdot)\in\cB_M(\R)$ and $a\in M_{p(\cdot)}$, then for every $\delta>0$,
%%%
\begin{equation}\label{eq:convolution-with-multiplier-3}
\|a*\varphi_\delta\|_{M_{p(\cdot)}}\le\|a\|_{M_{p(\cdot)}}.
\end{equation}
\end{theorem}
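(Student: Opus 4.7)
The plan is to realize $a*\varphi_\delta$ as a $\varphi_\delta$-averaged superposition of translates of the symbol $a$, and then to exploit the fact that, although $L^{p(\cdot)}(\R)$ itself is not translation-invariant, the multiplier algebra $M_{p(\cdot)}$ \emph{is}. Writing $(T_y a)(x):=a(x-y)$, one has the pointwise identity
$$(a*\varphi_\delta)(x)=\int_\R (T_y a)(x)\,\varphi_\delta(y)\,dy,$$
and I would lift this to the operator level, verifying that for every $f\in L^2(\R)\cap L^{p(\cdot)}(\R)$,
$$W^0(a*\varphi_\delta)f=\int_\R \varphi_\delta(y)\,W^0(T_y a)f\,dy$$
as an $L^{p(\cdot)}(\R)$-valued integral.

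The key computation is the intertwining identity
$$W^0(T_y a)=M_{-y}\,W^0(a)\,M_y,$$
where $M_y$ denotes the unimodular modulation $(M_y g)(t):=e^{iyt}g(t)$. With the convention $(Ff)(x)=\int_\R f(t)e^{ixt}\,dt$ used in the paper, a short calculation yields $FM_y=T_{-y}F$, from which the identity above follows. Since $|M_y g|=|g|$ pointwise, $M_y$ acts as an isometry on $L^{p(\cdot)}(\R)$ (the variable Lebesgue norm depends only on the modulus), and the same holds for $M_{-y}$. Therefore
$$\|T_y a\|_{M_{p(\cdot)}}=\|a\|_{M_{p(\cdot)}}\quad\text{for every } y\in\R,$$
i.e., the multiplier norm is translation-invariant, even though the underlying space is not.

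Combining the two displays above with Minkowski's integral inequality in the Banach function space $L^{p(\cdot)}(\R)$ and with the normalization \eqref{eq:convolution-with-multiplier-1}, I obtain
$$\|W^0(a*\varphi_\delta)f\|_{p(\cdot)}\le\int_\R\varphi_\delta(y)\,\|W^0(T_y a)f\|_{p(\cdot)}\,dy\le\|a\|_{M_{p(\cdot)}}\|f\|_{p(\cdot)}.$$
Passing to the operator norm on the dense subspace $L^2(\R)\cap L^{p(\cdot)}(\R)$ and recalling the definition \eqref{eq:multiplier-norm} yields \eqref{eq:convolution-with-multiplier-3}.

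The main technical obstacle is the rigorous justification of the vector-valued integral for $W^0(a*\varphi_\delta)f$: one must check strong measurability of $y\mapsto W^0(T_y a)f$ as an $L^{p(\cdot)}(\R)$-valued map, uniform integrability of its norm against $\varphi_\delta(y)\,dy$ (which follows from the translation-invariance bound $\|W^0(T_y a)f\|_{p(\cdot)}\le\|a\|_{M_{p(\cdot)}}\|f\|_{p(\cdot)}$), and the equality of the resulting Bochner integral with $F^{-1}((a*\varphi_\delta)\cdot Ff)$ almost everywhere. For $f\in C_c^\infty(\R)$ these are routine Fubini-type manipulations based on the fact that $Ff\in L^1(\R)\cap L^\infty(\R)$, and a density argument using $p(\cdot)\in\cB_M(\R)\subset\cP(\R)$ extends the conclusion to all $f\in L^2(\R)\cap L^{p(\cdot)}(\R)$; this is exactly the circle of ideas from \cite{KS-PLMS} that the authors invoke here.
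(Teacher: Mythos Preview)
Your argument is correct and is essentially the approach of \cite{KS-PLMS} that the paper invokes: the key observation that $W^0(T_y a)=M_{-y}W^0(a)M_y$ with the modulations $M_{\pm y}$ acting isometrically on $L^{p(\cdot)}(\R)$, followed by a Minkowski-type bound for the $\varphi_\delta$-average, is precisely the content of \cite[Section~4.2]{KS-PLMS}. The paper's own proof merely verifies that $L^{p(\cdot)}(\R)$ satisfies the hypotheses needed there and cites that source, whereas you have reconstructed the underlying computation.
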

%%%----------------------------------------------------------------------------
\begin{proof}
It follows from the proof of Theorem~\ref{th:continuous-embedding-of-multipliers}
and \cite[Lemma~3.3]{KS-PLMS} that the space $L^{p(\cdot)}(\R)$
satisfies the hypotheses of \cite[Theorem~1.3]{KS-PLMS}. It is shown in its
proof (see \cite[Section~4.2]{KS-PLMS}) that for every $\delta>0$  and every 
$f\in\mathcal{S}(\R)\cap L^{p(\cdot)}(\R)$, 
%%%
\[
\|F^{-1}(a*\varphi_\delta)Ff\|_{L^{p(\cdot)}(\R)}
\le 
\sup\left\{
\frac{\|F^{-1}aFf\|_{L^{p(\cdot)}(\R)}}{\|f\|_{L^{p(\cdot)}(\R)}}:
f\in\big(\mathcal{S}(\R)\cap L^{p(\cdot)}(\R)\big)\setminus\{0\}
\right\}
\|f\|_{L^{p(\cdot)}(\R)}.
\]
%%%
Then, for every $\delta>0$,
%%%
\begin{equation}\label{eq:convolution-with-multiplier-4}
\sup\left\{
\frac{\|F^{-1}(a*\varphi_\delta)Ff\|_{L^{p(\cdot)}(\R)}}{\|f\|_{L^{p(\cdot)}(\R)}}:
f\in\big(\mathcal{S}(\R)\cap L^{p(\cdot)}(\R)\big)\setminus\{0\}
\right\}
\le\|a\|_{M_{p(\cdot)}}.
\end{equation}
%%%
Since $p_+<\infty$, the space $L^{p(\cdot)}(\R)$ is separable
(see, e.g., \cite[Theorem~2.78]{CF13}). Then it follows from 
\cite[Chap.~1, Corollary~5.6]{BS88} and \cite[Theorem~2.3 and 6.1]{KS-PLMS}
that for every $\delta>0$, the left-hand side of inequality
\eqref{eq:convolution-with-multiplier-4} coincides with
$\|a*\varphi_\delta\|_{M_{p(\cdot)}}$, which completes the proof
of inequality \eqref{eq:convolution-with-multiplier-3}.
\qed
\end{proof}
%%%----------------------------------------------------------------------------
\section{Proofs of the main results}
\subsection{Continuous Fourier multipliers vanishing at infinity}
Since the interpolation theorem of the Riesz-Thorin type is available for
variable Lebesgue spaces (see Theorem~\ref{th:interpolation}), we can prove the 
next key result following ideas of L.~H\"ormander from the proof of
\cite[Theorem~1.16]{H60}.
%%%----------------------------------------------------------------------------
\begin{theorem}\label{th:vanishing-multipliers}
\begin{enumerate}
\item[(a)]
If $p(\cdot)\in\cB_M^*(\R)$, then
\[
C_0(\R)\cap M_{\langle p(\cdot)\rangle}
\subset
\operatorname{clos}_{M_{p(\cdot)}}\big(C_c^\infty(\R)\big).
\]
\item[(b)]
If $p(\cdot)\in\cB_M(\R)$, then 
\[
C_0(\R)\cap V(\R)
\subset
\operatorname{clos}_{M_{p(\cdot)}}\big(C_c^\infty(\R)\big).
\]
\end{enumerate}
\end{theorem}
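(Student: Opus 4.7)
My plan is to construct, uniformly for both parts, a family of approximants $b_{R,\delta} := \chi_R \cdot (a*\varphi_\delta) \in C_c^\infty(\R)$, where $\varphi_\delta$ is the mollifier from Theorem~\ref{th:convolution-with-multiplier} and $\chi_R(x) := \chi(x/R)$ for a fixed $\chi \in C_c^\infty(\R)$ with $\chi \equiv 1$ on $[-1,1]$ and $\operatorname{supp}\chi \subset [-2,2]$, and to show $\|a - b_{R,\delta}\|_{M_{p(\cdot)}} \to 0$ as $\delta \to 0$ and $R \to \infty$ in a suitable order. Following H\"ormander's scheme from \cite[Theorem~1.16]{H60}, the idea is that uniform convergence $\|a - b_{R,\delta}\|_{L^\infty}\to 0$ controls the $L^2$-operator endpoint (via Plancherel), a Banach-algebra-type estimate controls an auxiliary multiplier endpoint, and Theorem~\ref{th:interpolation} bridges the two.

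The common ingredients are straightforward. For the uniform piece, $a \in C_0(\R)$ is uniformly continuous so $\|a - a*\varphi_\delta\|_{L^\infty} \to 0$ as $\delta \to 0$, and since $a*\varphi_\delta$ inherits vanishing at $\pm\infty$ from $a$, $\|(1-\chi_R)(a*\varphi_\delta)\|_{L^\infty} \to 0$ as $R \to \infty$ for each fixed $\delta$. For the auxiliary bound, in case (a) I fix $\theta \in (0, \tau_{p(\cdot)})$ with $a \in M_{p_\theta(\cdot)}$; Theorem~\ref{th:convolution-with-multiplier} gives $\|a*\varphi_\delta\|_{M_{p_\theta(\cdot)}} \le \|a\|_{M_{p_\theta(\cdot)}}$, and the Banach algebra property of $M_{p_\theta(\cdot)}$ combined with Stechkin's inequality \eqref{eq:Stechkin} applied to $\chi_R \in V(\R)$ (noting the scaling invariance $V(\chi_R) = V(\chi)$) yields $\sup_{R,\delta}\|b_{R,\delta}\|_{M_{p_\theta(\cdot)}} < \infty$. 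In case (b) the same reasoning directly in $V(\R)$ gives the simpler estimate $\|b_{R,\delta}\|_{V(\R)} \le (1+V(\chi))(\|a\|_{L^\infty} + V(a))$.

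The interpolation step is where the two cases diverge. For (a), Theorem~\ref{th:interpolation} applied with $p_0 \equiv 2$, $p_1 = p_\theta(\cdot)$ and the relation $1/p(x) = \theta/2 + (1-\theta)/p_\theta(x)$, together with the identity $\|W^0(f)\|_{\cB(L^2)} = \|f\|_{L^\infty}$, gives
\[
\|a - b_{R,\delta}\|_{M_{p(\cdot)}} \le 4\,\|a - b_{R,\delta}\|_{L^\infty}^{\theta}\, \|a - b_{R,\delta}\|_{M_{p_\theta(\cdot)}}^{1-\theta},
\]
which tends to zero by the preceding steps. For (b), since $p(\cdot)$ is only assumed to lie in $\cB_M(\R)$, an interpolation of $L^{p(\cdot)}$ with $L^2$ as an endpoint is not directly available; I would invoke Theorem~\ref{th:Deining-interpolation} to obtain $p_0 \in (1,\infty)$, $\theta \in (0,1)$ and $p_\theta(\cdot) \in \cB_M(\R)$ with $1/p(x) = \theta/p_0 + (1-\theta)/p_\theta(x)$, and use the corresponding Riesz--Thorin estimate to reduce the problem to showing $\|a - b_{R,\delta}\|_{M_{p_0}} \to 0$ on the standard Lebesgue endpoint. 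The latter is in turn handled by a second, classical application of Riesz--Thorin between $L^r$ and $L^2$ for some $r$ with $|1/r - 1/2| > |1/p_0 - 1/2|$, using the bounded $M_r$-norm of $a - b_{R,\delta}$ (via Stechkin on standard Lebesgue spaces) and the uniform convergence in $L^\infty$.

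The principal obstacle is precisely this last step in (b): the hypothesis $p(\cdot)\in\cB_M(\R)$ alone does not furnish a direct interpolation with $L^2$ as an endpoint, so one must thread Diening's theorem together with an additional classical interpolation at the constant-exponent level, carefully verifying that the single uniform $V(\R)$-bound on the approximants survives both Riesz--Thorin applications and that the four numerical relations among $p(\cdot)$, $p_0$, $r$, and $p_\theta(\cdot)$ are mutually compatible.
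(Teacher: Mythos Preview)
Your proposal is correct and follows essentially the same strategy as the paper: a two-step approximation (cutoff and mollification) with the $M_{p(\cdot)}$-convergence controlled by Theorem~\ref{th:interpolation}, using the $L^2$ endpoint directly in~(a) and threading Theorem~\ref{th:Deining-interpolation} together with a secondary classical Riesz--Thorin between $L^2$ and $L^q$ in~(b). The only differences are cosmetic---the paper cuts off first (with a piecewise linear $\psi_n$) and then mollifies, whereas you mollify first and then multiply by a smooth $\chi_R$---and neither order affects the argument.
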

%%%----------------------------------------------------------------------------
\begin{proof}
For $n\in\N$, let
\[
\psi_n(x):=\left\{\begin{array}{lll}
1 &\mbox{if} & |x|\le n,
\\
n+1-|x| &\mbox{if} & n<|x|<n+1,
\\
0 &\mbox{if} & |x|\ge n+1.
\end{array}\right.
\]
Then $\psi_n$ has compact support and $\|\psi_n\|_{V(\R)}=3$.

(a) Let $a\in C_0(\R)\cap M_{\langle p(\cdot)\rangle}$. Fix $\eps>0$. By the 
definition of $M_{\langle p(\cdot)\rangle}$, there exists 
$\theta\in (0,\tau_{p(\cdot)})$ such that 
$a\in C_0(\R)\cap M_{p_\theta(\cdot)}$, where $p_\theta(\cdot)$ is defined by
\eqref{eq:p-theta}. By the Stechkin type inequality
\eqref{eq:Stechkin}, for every $n\in\N$,
\[
\|\psi_n\|_{M_{p_\theta(\cdot)}}\le 3\|S\|_{\cB(L^{p_\theta(\cdot)}(\R))}=:c_\theta.
\]
Take $b_n:=a\psi_n$. Then
%%%
\begin{equation}\label{eq:vanishing-multipliers-1}
\lim_{n\to\infty}\|a-b_n\|_{L^\infty(\R)}=0,
\end{equation}
%%%
$b_n\in C_0(\R)$ has compact support and for every $n\in\N$,
%%%
\begin{equation}\label{eq:vanishing-multipliers-2}
\|a-b_n\|_{M_{p_\theta(\cdot)}} 
\le 
\|a\|_{M_{p_\theta(\cdot)}}\left(1+\|\psi_n\|_{M_{p_\theta(\cdot)}}\right)
\le 
(1+c_\theta)\|a\|_{M_{p_\theta(\cdot)}}
\end{equation}
%%%
and
%%%
\begin{equation}\label{eq:vanishing-multipliers-3}
\|b_n\|_{M_{p_\theta(\cdot)}} 
\le 
\|\psi_n\|_{M_{p_\theta(\cdot)}}
\|a\|_{M_{p_\theta(\cdot)}}
\le 
c_\theta\|a\|_{M_{p_\theta(\cdot)}}.
\end{equation}
%%%
%%%
Equality \eqref{eq:p-theta}, Theorem~\ref{th:interpolation} and inequality
\eqref{eq:vanishing-multipliers-2} imply that for every $n\in\N$,
%%%
\begin{equation}\label{eq:vanishing-multipliers-4}
\|a-b_n\|_{M_{p(\cdot)}} 
\le 
4\|a-b_n\|_{L^\infty(\R)}^\theta
\|a-b_n\|_{M_{p_\theta(\cdot)}}^{1-\theta}
\le 
4(1+c_\theta)^{1-\theta}\|a\|_{M_{p_\theta(\cdot)}}^{1-\theta}
\|a-b_n\|_{L^\infty(\R)}^\theta.
\end{equation}
%%%
It follows from \eqref{eq:vanishing-multipliers-1} and
\eqref{eq:vanishing-multipliers-3} that there exists $n_0\in\N$ such that
%%%
\begin{equation}\label{eq:vanishing-multipliers-5}
\|a-b_{n_0}\|_{M_{p(\cdot)}}<\eps/2. 
\end{equation}

Let $\varphi\in C_c^\infty(\R)$ be a non-negative even function satisfying 
\eqref{eq:convolution-with-multiplier-1} and for $\delta>0$ the function
$\varphi_\delta$ be defined by \eqref{eq:convolution-with-multiplier-2}. By
Theorem~\ref{th:convolution-with-multiplier} and inequality
\eqref{eq:vanishing-multipliers-3}, for every $\delta>0$,
%%%
\begin{equation}\label{eq:vanishing-multipliers-6}
\|b_{n_0}*\varphi_\delta\|_{M_{p_\theta(\cdot)}}
\le 
\|b_{n_0}\|_{M_{p_\theta(\cdot)}} 
\le 
c_\theta\|a\|_{M_{p_\theta(\cdot)}}.
\end{equation}
%%%
It follows from \cite[Propositions~4.18, 4.20--4.21]{B11} that 
$b_{n_0}*\varphi_\delta\in C_c^\infty(\R)$ and 
%%%
\begin{equation}\label{eq:vanishing-multipliers-7}
\lim_{\delta\to 0}\|b_{n_0}*\varphi_\delta-b_{n_0}\|_{L^\infty(\R)}=0.
\end{equation}
%%%
As before, equality \eqref{eq:p-theta}, Theorem~\ref{th:interpolation} and
inequality \eqref{eq:vanishing-multipliers-6} imply that for every $\delta>0$,
%%%
\begin{align}
\|b_{n_0}*\varphi_\delta-b_{n_0}\|_{M_{p(\cdot)}} 
&\le 
4 
\|b_{n_0}*\varphi_\delta-b_{n_0}\|_{L^\infty(\R)}^\theta 
\|b_{n_0}*\varphi_\delta-b_{n_0}\|_{M_{p_\theta(\cdot)}}^{1-\theta}
\nonumber
\\
&\le 
2^{3-\theta}c_\theta^{1-\theta}\|a\|_{M_{p_\theta(\cdot)}}^{1-\theta}
\|b_{n_0}*\varphi_\delta-b_{n_0}\|_{L^\infty(\R)}^\theta.
\label{eq:vanishing-multipliers-8}
\end{align}
%%%
Combining \eqref{eq:vanishing-multipliers-7} and 
\eqref{eq:vanishing-multipliers-8}, we conclude that there exists $\delta_0>0$
such that
%%%
\begin{equation}\label{eq:vanishing-multipliers-9}
\|b_{n_0}*\varphi_{\delta_0}-b_{n_0}\|_{M_{p(\cdot)}}<\eps/2.
\end{equation} 
%%%
Hence, it follows from \eqref{eq:vanishing-multipliers-5} and 
\eqref{eq:vanishing-multipliers-9} that for every $\eps>0$ there exists
a function $b_{n_0}*\varphi_{\delta_0}\in C_c^\infty(\R)$ such that
%%%
\begin{equation}\label{eq:vanishing-multipliers-10}
\|a-b_{n_0}*\varphi_{\delta_0}\|_{M_{p(\cdot)}}<\eps.
\end{equation}
%%%
Thus $a\in\operatorname{clos}_{M_{p(\cdot)}}\big(C_c^\infty(\R)\big)$.
Part (a) is proved.

(b) Let $a\in C_0(\R)\cap V(\R)$. Fix $\eps>0$. By 
Theorem~\ref{th:Deining-interpolation}, there exist two constants
$p_0\in(1,\infty)$, $\theta\in(0,1)$ and a variable exponent 
$p_\theta(\cdot)\in\cB_M(\R)$ such that \eqref{eq:Diening-interpolation} is
fulfilled. If $2\le p_0<\infty$, then take $q\in(p_0,\infty)$.
If $1<p_0<2$, then take $q\in(1,p_0)$. In both cases, choose $\eta$ such that
%%%
\begin{equation}\label{eq:vanishing-multipliers-11}
\frac{1}{p_0}=\frac{\eta}{2}+\frac{1-\eta}{q}.
\end{equation}
%%%
Then, as it is easily seen, in both cases we have
\[
\eta=\frac{2p_0-2q}{2p_0-p_0q}\in(0,1].
\]
Take $b_n:=a\psi_n$. Then it follows from \eqref{eq:vanishing-multipliers-2}--%
\eqref{eq:vanishing-multipliers-3} and the Stechkin type inequality 
\eqref{eq:Stechkin} that 
%%%
\begin{align}
\label{eq:vanishing-multipliers-12}
&
\|a-b_n\|_{M_{p_\theta(\cdot)}}
\le 
(1+c_\theta)\|a\|_{M_{p_\theta(\cdot)}} 
\le 
(1+c_\theta)c_\theta\|a\|_{V(\R)},
\\
\label{eq:vanishing-multipliers-13}
&
\|b_n\|_{M_{p_\theta(\cdot)}}
\le 
c_\theta\|a\|_{M_{p_\theta(\cdot)}}
\le 
c_\theta^2\|a\|_{V(\R)}
\end{align}
%%%
and
%%%
\begin{align}
\label{eq:vanishing-multipliers-14}
&
\|a-b_n\|_{M_q}
\le 
\left(1+\|\psi_n\|_{M_q}\right)\|a\|_{M_q}
\le 
(1+c_q)c_q\|a\|_{V(\R)},
\\
\label{eq:vanishing-multipliers-15}
&
\|b_n\|_{M_q}
\le 
\|\psi_n\|_{M_q}\|a\|_{M_q}
\le 
c_q^2\|a\|_{V(\R)},
\end{align}
%%%
where $c_q:=3\|S\|_{\cB(L^q(\R))}$.

Equalities \eqref{eq:Diening-interpolation} and 
\eqref{eq:vanishing-multipliers-11}, Theorem~\ref{th:interpolation}, and
inequalities \eqref{eq:vanishing-multipliers-12} and
\eqref{eq:vanishing-multipliers-14} imply that for every $n\in\N$,
%%%
\begin{align}
\|a-b_n\|_{M_{p(\cdot)}}
& \le 
4\|a-b_n\|_{M_{p_0}}^\theta
\|a-b_n\|_{M_{p_\theta(\cdot)}}^{1-\theta}
\nonumber
\\
&\le 
4
\left[
\|a-b_n\|_{L^\infty(\R)}^\eta 
\|a-b_n\|_{M_q}^{1-\eta}
\right]^\theta
\left[(1+c_\theta)c_\theta\|a\|_{V(\R)}\right]^{1-\theta}
\nonumber 
\\
&\le
4\big[(1+c_\theta)c_\theta\big]^{1-\theta}
\big[(1+c_q)c_q\big]^{(1-\eta)\theta}
\|a\|_{V(\R)}^{(1-\eta)\theta+1-\theta}
\|a-b_n\|_{L^\infty(\R)}^{\eta\theta}.
\label{eq:vanishing-multipliers-16}
\end{align}
%%%
It follows from \eqref{eq:vanishing-multipliers-1} and
\eqref{eq:vanishing-multipliers-16} that there exists $n_0\in\N$
such that \eqref{eq:vanishing-multipliers-5} holds.

As before, equalities \eqref{eq:Diening-interpolation} and 
\eqref{eq:vanishing-multipliers-11}, Theorem~\ref{th:interpolation}, and
inequalities \eqref{eq:vanishing-multipliers-6},
\eqref{eq:vanishing-multipliers-13}, \eqref{eq:vanishing-multipliers-15} and
\[
\|b_{n_0}*\varphi_\delta\|_{M_q}
\le
\|b_{n_0}\|_{M_q},
\quad\delta>0,
\]
(obtained by analogy with \eqref{eq:vanishing-multipliers-6})
imply that for every $\delta>0$,
%%%
\begin{align}
\|b_{n_0}*\varphi_\delta- b_{n_0}\|_{M_{p(\cdot)}}
&
\le 
4\|b_{n_0}*\varphi_\delta-b_{n_0}\|_{M_{p_0}}^\theta
\|b_{n_0}*\varphi_\delta-b_{n_0}\|_{M_{p_\theta(\cdot)}}^{1-\theta}
\nonumber
\\
&
\le 
4\left[
\|b_{n_0}*\varphi_\delta-b_{n_0}\|_{L^\infty(\R)}^\eta 
\left(2\|b_{n_0}\|_{M_q}\right)^{1-\eta}
\right]^\theta
\left[2\|b_{n_0}\|_{M_{p_\theta(\cdot)}}\right]^{1-\theta}
\nonumber
\\
&
\le 
2^{2+(1-\eta)\theta+1-\theta}c_q^{2(1-\eta)\theta}c_\theta^{2(1-\theta)}
\|a\|_{V(\R)}^{(1-\eta)\theta+1-\theta}
\|b_{n_0}*\varphi_\delta-b_{n_0}\|_{L^\infty(\R)}^{\eta\theta}.
\label{eq:vanishing-multipliers-17}
\end{align}
%%%
Combining \eqref{eq:vanishing-multipliers-7} and 
\eqref{eq:vanishing-multipliers-17}, we conclude that there exists
$\delta_0>0$ such that \eqref{eq:vanishing-multipliers-9} holds.
Hence it follows from \eqref{eq:vanishing-multipliers-5} and
\eqref{eq:vanishing-multipliers-9} that for every $a\in C_0(\R)\cap V(\R)$
and every $\eps>0$ there exists a function 
$b_{n_0}*\varphi_{\delta_0}\in C_c^\infty(\R)$ such that 
\eqref{eq:vanishing-multipliers-10} is fulfilled. Thus
$a\in\operatorname{clos}_{M_{p(\cdot)}}\big(C_c^\infty(\R)\big)$.
\qed
\end{proof}
%%%----------------------------------------------------------------------------
\subsection{Continuous Fourier multipliers on one-point and two-point
compactifications of the real line}
For a function $f\in C(\overline{\R})$, let 
%%%
\begin{equation}\label{eq:jump-killer-at-infinity}
J_f(\infty,x):=\left\{\begin{array}{lll}
f(-\infty) & \mbox{if} & x\in(-\infty,-1),
\\
\frac{1}{2}\big[f(-\infty)(1-x)+f(+\infty)(1+x)\big],
& \mbox{if} &x\in[-1,1],
\\
f(+\infty), & \mbox{if} & x\in(1,+\infty).
\end{array}\right.
\end{equation}
%%%
It is easy to see that
%%%
\begin{equation}\label{eq:norm-of-jump-killer-at-infinity}
\|J_f(\infty,\cdot)\|_{V(\R)}
=
\max\big\{|f(-\infty)|,|f(+\infty)|\big\}
+
|f(+\infty)-f(-\infty)|.
\end{equation}
%%%
Therefore $J_f(\infty,\cdot)\in C(\overline{\R})\cap V(\R)$ and
$f\in J_f(\infty,\cdot)\in C_0(\R)$.

The following lemma is proved by analogy with \cite[Lemma~3.1(i)]{KS94}.
%%%----------------------------------------------------------------------------
\begin{lemma}\label{le:multipliers-one-two-point}
If $p(\cdot)\in\cB_M(\R)$, then 
%%%
\begin{equation}\label{eq:multipliers-one-two-point-1}
C_{p(\cdot)}(\dR)= C(\dR)\cap C_{p(\cdot)}(\overline{\R}).
\end{equation}
\end{lemma}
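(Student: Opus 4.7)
The plan is to prove a two-sided inclusion, where one direction is almost immediate and the other uses the jump-killer $J_f(\infty,\cdot)$ defined just before the statement.

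First I would treat the inclusion $C_{p(\cdot)}(\dR)\subseteq C(\dR)\cap C_{p(\cdot)}(\overline{\R})$. Since $C(\dR)\cap V(\R)\subset C(\overline{\R})\cap V(\R)$, taking closures in $M_{p(\cdot)}$ gives $C_{p(\cdot)}(\dR)\subseteq C_{p(\cdot)}(\overline{\R})$. To see that a limit also lies in $C(\dR)$, I would invoke Theorem~\ref{th:continuous-embedding-of-multipliers}: convergence in $M_{p(\cdot)}$ implies uniform convergence on $\R$, and $C(\dR)$ is closed in $L^\infty(\R)$, so any $M_{p(\cdot)}$-limit of functions in $C(\dR)\cap V(\R)$ still belongs to $C(\dR)$.

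For the reverse inclusion, take $a\in C(\dR)\cap C_{p(\cdot)}(\overline{\R})$ and set $c:=a(-\infty)=a(+\infty)$. By definition there exist $a_n\in C(\overline{\R})\cap V(\R)$ with $a_n\to a$ in $M_{p(\cdot)}$, and in particular uniformly, so $a_n(\pm\infty)\to c$. Form the piecewise linear functions $J_{a_n}(\infty,\cdot)$ as in \eqref{eq:jump-killer-at-infinity} and put $\widetilde{a}_n:=a_n-J_{a_n}(\infty,\cdot)$. Since $J_{a_n}(\infty,\pm\infty)=a_n(\pm\infty)$, we have $\widetilde{a}_n\in C_0(\R)\subset C(\dR)$, and $\widetilde{a}_n\in V(\R)$ because both $a_n$ and the piecewise linear $J_{a_n}(\infty,\cdot)$ are. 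Hence $\widetilde{a}_n\in C(\dR)\cap V(\R)$.

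Next I would show $J_{a_n}(\infty,\cdot)\to c$ in $M_{p(\cdot)}$. Writing the analogue of \eqref{eq:norm-of-jump-killer-at-infinity} for $J_{a_n}(\infty,\cdot)-c$, one has
\[
\|J_{a_n}(\infty,\cdot)-c\|_{V(\R)}
=
\max\big\{|a_n(-\infty)-c|,|a_n(+\infty)-c|\big\}+|a_n(+\infty)-a_n(-\infty)|,
\]
which tends to $0$ as $n\to\infty$. By the Stechkin inequality \eqref{eq:Stechkin} this gives $J_{a_n}(\infty,\cdot)\to c$ in $M_{p(\cdot)}$, so $\widetilde{a}_n+c\to a$ in $M_{p(\cdot)}$. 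Since $\widetilde{a}_n+c\in C(\dR)\cap V(\R)$ (constants lie in $C(\dR)\cap V(\R)$), we conclude $a\in C_{p(\cdot)}(\dR)$, establishing \eqref{eq:multipliers-one-two-point-1}.

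The argument is entirely elementary once the continuous embedding of $M_{p(\cdot)}$ into $L^\infty(\R)$ and the Stechkin inequality are in hand; the only delicate point is to verify that the jump-killer difference $J_{a_n}(\infty,\cdot)-c$ has small $V(\R)$-norm, which follows from the explicit formula \eqref{eq:norm-of-jump-killer-at-infinity} together with the uniform convergence $a_n(\pm\infty)\to c$.
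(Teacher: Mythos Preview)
Your proof is correct and follows essentially the same approach as the paper. The only cosmetic difference is that the paper packages the correction as $b_n:=J_{a_n-a(\infty)}(\infty,\cdot)$ and shows $a_n-b_n\to a$, whereas you write the same correction as $J_{a_n}(\infty,\cdot)-c$; since $J_{f-c}(\infty,\cdot)=J_f(\infty,\cdot)-c$, your approximants $\widetilde{a}_n+c$ coincide with the paper's $a_n-b_n$, and both arguments reduce to the Stechkin estimate on the $V(\R)$-norm computed in \eqref{eq:norm-of-jump-killer-at-infinity}.
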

%%%----------------------------------------------------------------------------
\begin{proof}
It is obvious that $C_{p(\cdot)}(\dR)\subset C_{p(\cdot)}(\overline{\R})$.
On the other hand, it follows from 
Theorem~\ref{th:continuous-embedding-of-multipliers} that  
$C_{p(\cdot)}(\dR)\subset C(\dR)$. Therefore,
%%%
\begin{equation}\label{eq:multipliers-one-two-point-2}
C_{p(\cdot)}(\dR)\subset C(\dR)\cap C_{p(\cdot)}(\overline{\R}).
\end{equation}

To prove the opposite embedding, let us consider an arbitrary function
$a$ in $C_{p(\cdot)}(\overline{\R})$ such that $a(+\infty)=a(-\infty)$.
Let $\{a_n\}_{n\in\N}\subset C(\overline{\R})\cap V(\R)$ be a sequence
such that $\|a_n-a\|_{M_{p(\cdot)}}\to 0$ as $n\to\infty$. According to
Theorem~\ref{th:continuous-embedding-of-multipliers}, the sequence
$\{a_n\}_{n\in\N}$ converges to $a$ uniformly on $\R$. Hence, in particular, 
$a_n(\pm\infty)\to a(\infty)$ as $n\to\infty$. Let the functions 
$b_n:=J_{a_n-a(\infty)}(\infty,\cdot)\in C(\overline{\R})\cap V(\R)$ be 
defined by \eqref{eq:jump-killer-at-infinity} with $a_n-a(\infty)$ in place 
of $f$. By the Stechkin type inequality \eqref{eq:Stechkin} and equality
\eqref{eq:norm-of-jump-killer-at-infinity}, we have
%%%
\begin{align*}
\|b_n\|_{M_{p(\cdot)}}
\le& 
\|S\|_{\cB(L^{p(\cdot)}(\R))}
\|J_{a_n-a(\infty)}(\infty,\cdot)\|_{V(\R)}
\\
=&
\|S\|_{\cB(L^{p(\cdot)}(\R))}
\max\big\{|a_n(-\infty)-a(\infty)|,|a_n(+\infty)-a(\infty)|\big\}
+
\|S\|_{\cB(L^{p(\cdot)}(\R))}
|a_n(+\infty)-a_n(-\infty)|.
\end{align*}
%%%
Therefore, $\|b_n\|_{M_{p(\cdot)}}\to 0$ 
as $n\to\infty$ and thus,
\[
\lim_{n\to\infty}\|a_n-b_n-a\|_{M_{p(\cdot)}}=0.
\]
Since $a_n-b_n\in C(\dR)\cap V(\R)$, the latter equality implies that
$a\in C_{p(\cdot)}(\dR)$. Thus
%%%
\begin{equation}\label{eq:multipliers-one-two-point-3}
C(\dR)\cap C_{p(\cdot)}(\overline{\R})\subset C_{p(\cdot)}(\dR).
\end{equation}
%%%
Combining embeddings \eqref{eq:multipliers-one-two-point-2}--%
\eqref{eq:multipliers-one-two-point-3}, we arrive at
\eqref{eq:multipliers-one-two-point-1}.
\qed
\end{proof}
%%%----------------------------------------------------------------------------
\subsection{Proof of Theorem~\ref{th:first}}
Since $\C{\bf{\dot{+}}} C_c^\infty(\R)\subset C(\dR)\cap V(\R)$, we have
%%%
\begin{equation}\label{eq:first-1}
\operatorname{clos}_{M_{p(\cdot)}}\big(\C{\bf{\dot{+}}} C_c^\infty(\R)\big)
\subset 
\operatorname{clos}_{M_{p(\cdot)}}\big(C(\dR)\cap V(\R)\big)
=
C_{p(\cdot)}(\dR).
\end{equation}
%%%
To prove the opposite embedding, take $a\in C_{p(\cdot)}(\dR)$ and fix 
$\eps>0$. Then there exists $b\in C(\dR)\cap V(\R)$ such that
%%%
\begin{equation}\label{eq:first-2}
\|a-b\|_{M_{p(\cdot)}}<\eps/2.
\end{equation}
%%%
Then $b-b(\infty)\in C_0(\R)\cap V(\R)$ and in view of
Theorem~\ref{th:vanishing-multipliers}(b) there exists $c\in C_c^\infty(\R)$
such that 
%%%
\begin{equation}\label{eq:first-3}
\|b-b(\infty)-c\|_{M_{p(\cdot)}}<\eps/2.
\end{equation}
%%%
It follows from inequalities \eqref{eq:first-2}--\eqref{eq:first-3} that
%%%
\begin{equation}\label{eq:first-4}
\|a-(b(\infty)+c)\|_{M_{p(\cdot)}}<\eps.
\end{equation}
%%%
Since $b(\infty)+c\in\C{\bf{\dot{+}}} C_c^\infty(\R)$, the latter inequality
implies that the function $a$ belongs to
$\operatorname{clos}_{M_{p(\cdot)}}
\big(\C{\bf{\dot{+}}} C_c^\infty(\R)\big)$. 
Hence
%%%
\begin{equation}\label{eq:first-5}
C_{p(\cdot)}(\dR)\subset \operatorname{clos}_{M_{p(\cdot)}}
\big(\C{\bf{\dot{+}}} C_c^\infty(\R)\big).
\end{equation}
%%%
Combining embeddings \eqref{eq:first-1} and \eqref{eq:first-5}
with Lemma~\ref{le:multipliers-one-two-point}, we arrive at the statement of
Theorem~\ref{th:first}.
\qed
%%%----------------------------------------------------------------------------
\subsection{Proof of Theorem~\ref{th:second}}
By the Stechkin type inequality \eqref{eq:Stechkin}, for every 
$\theta\in(0,\tau_{p(\cdot)})$,
\[
C(\dR)\cap V(\R)\subset C(\dR)\cap M_{p_\theta(\cdot)},
\quad
C(\overline{\R})\cap V(\R)
\subset
C(\overline{\R})\cap M_{p_\theta(\cdot)},
\]
whence
%%%
\begin{eqnarray}
\label{eq:second-1}
&&
C_{p(\cdot)}(\dR)
=
\operatorname{clos}_{M_{p(\cdot)}} \big(C(\dR)\cap V(\R)\big)
\subset
\operatorname{clos}_{M_{p(\cdot)}} 
\big(C(\dR)\cap M_{\langle p(\cdot)\rangle}\big)
=
C_{\langle p(\cdot)\rangle}(\dR),
\\
\label{eq:second-2}
&&
C_{p(\cdot)}(\overline{\R})
=
\operatorname{clos}_{M_{p(\cdot)}} \big(C(\overline{\R})\cap V(\R)\big)
\subset
\operatorname{clos}_{M_{p(\cdot)}} 
\big(C(\overline{\R})\cap M_{\langle p(\cdot)\rangle}\big)
=
C_{\langle p(\cdot)\rangle}(\overline{\R}),
\end{eqnarray}
%%%

To prove the opposite embedding to \eqref{eq:second-1}, consider
$a\in C_{\langle p(\cdot)\rangle}(\dR)$. Then for every $\eps>0$ there exists
$b\in C(\dR)\cap M_{\langle p(\cdot)\rangle}$ such that \eqref{eq:first-2}
holds. Since $b(\infty)\in\C\subset M_{\langle p(\cdot)\rangle}$, we see that
$b-b(\infty)\in C_0(\R)\cap M_{\langle p(\cdot)\rangle}$. In view of
Theorem~\ref{th:vanishing-multipliers}(a), there exists $c\in C_c^\infty(\R)$
such that \eqref{eq:first-3} is fulfilled. It follows from inequalities
\eqref{eq:first-2}--\eqref{eq:first-3} that inequality \eqref{eq:first-4}
holds. Since 
$b(\infty)+c\in\C{\bf{\dot{+}}} C_c^\infty(\R)\subset C(\dR)\cap V(\R)$,
inequality \eqref{eq:first-4} implies that $a\in C_{p(\cdot)}(\dR)$. Hence
%%%
\begin{equation}\label{eq:second-3}
C_{\langle p(\cdot)\rangle}(\dR)\subset C_{p(\cdot)}(\dR).
\end{equation}
%%%
Combining embeddings \eqref{eq:second-1} and \eqref{eq:second-3}, we arrive
at the first equality in Theorem~\ref{th:second}.

Now assume that $a\in C_{\langle p(\cdot)\rangle}(\overline{\R})$. Then for 
every $\eps>0$ there exists a function 
$b\in C(\overline{\R})\cap M_{\langle p(\cdot)\rangle}$ 
such that \eqref{eq:first-2} holds. Let $J_b(\infty,\cdot)$ be defined by
\eqref{eq:jump-killer-at-infinity} with $b$ in place of $f$. Then, by the 
Stechkin type inequality \eqref{eq:Stechkin}, 
$J_b(\infty,\cdot)\in V(\R)\subset M_{\langle p(\cdot)\rangle}$.
Therefore, $b-J_b(\infty,\cdot)\in C_0(\R)\cap M_{\langle p(\cdot)\rangle}$.
By Theorem~\ref{th:vanishing-multipliers}(a), there exists a function 
$c\in C_c^\infty(\R)$ such that 
%%%
\begin{equation}\label{eq:second-4}
\|b-J_b(\infty,\cdot)-c\|_{M_{p(\cdot)}}<\eps/2.
\end{equation}
%%%
It follows from inequalities \eqref{eq:first-2} and \eqref{eq:second-4}
that 
\[
\|a-(J_b(\infty,\cdot)+c)\|_{M_{p(\cdot)}}<\eps.
\]
Since $J_b(\infty,\cdot)\in C(\overline{\R})\cap V(\R)$ and
$c\in C_c^\infty(\R)\subset C(\overline{\R})\cap V(\R)$, the latter
inequality implies that the function $a$ belongs to
$\operatorname{clos}_{M_{p(\cdot)}}\big(C(\overline{\R})\cap V(\R)\big)=
C_{p(\cdot)}(\overline{\R})$. Hence
%%%
\begin{equation}\label{eq:second-5}
C_{\langle p(\cdot)\rangle}(\overline{\R})\subset C_{p(\cdot)}(\overline{\R}).
\end{equation}
%%%
Combining embeddings \eqref{eq:second-2} and \eqref{eq:second-5}, we arrive
at the second equality in Theorem~\ref{th:second}.
\qed
%%%----------------------------------------------------------------------------
\subsection{Proof of Corollary~\ref{co:standard-Lp}}
Let $c\in\C$ and $\varphi\in C_c^\infty(\R)$. Then we have
$F^{-1}\varphi\in\mathcal{S}(\R)\subset L^1(\R)$. Hence $c+\varphi\in W(\R)$ 
and $\C{\bf{\dot{+}}}C_c^\infty(\R)\subset W(\R)$. Then, taking into
account Theorem~\ref{th:first}, we see that
%%%
\begin{equation}\label{eq:standard-Lp-1}
C_p(\dR)=\operatorname{clos}_{M_p}\big(\C{\bf{\dot{+}}}C_c^\infty(\R)\big)
\subset\operatorname{clos}_{M_p}\big(W(\R)\big).
\end{equation}
%%%
It is shown in the proof of \cite[Theorem~2.13]{D79} that for every 
$a\in W(\R)$ and $\eps>0$ there exists a function $b$ of the form 
\[
b(x)=c-\sum_{k=-n}^n c_k-\sum_{k=-n}^n c_k\left(\frac{x-i}{x+i}\right)^k,
\quad x\in\R,
\]
where $c,c_{-n},\dots,c_n\in\C$ and $n\in\N\cup\{0\}$, such that 
$\|a-b\|_{M_p}<\eps$. It is easy to check that $b\in C(\dR)\cap V(\R)$.
Hence $a\in\operatorname{clos}_{M_p}\big(C(\dR)\cap V(\R)\big)=C_p(\dR)$
and
%%%
\begin{equation}\label{eq:standard-Lp-2}
\operatorname{clos}_{M_p}\big(W(\R)\big)\subset C_p(\dR).
\end{equation}
%%%
Combining \eqref{eq:standard-Lp-1}--\eqref{eq:standard-Lp-2}
and the first equality in Theorem~\ref{th:second} for the constant exponent
$p\in(1,\infty)\setminus\{2\}$, we arrive at the equality
\[
C_p(\dR)=\operatorname{clos}_{M_p}\big(W(\R)\big)=C_{\langle p\rangle}(\dR).
\]
By \cite[Theorem~A]{FG71}, there exists a function 
$a\in C(\dR)\cap M_p(\R)\setminus\operatorname{clos}_{M_p}\big(W(\R)\big)$,
which completes the proof of 
$C_{\langle p\rangle}(\dR)\subsetneqq C(\dR)\cap M_p$.
It remains to observe that the equality 
$C_p(\overline{\R})=C_{\langle p\rangle}(\overline{\R})$
follows immediately from the second equality in Theorem~\ref{th:second}.
\qed
%%%----------------------------------------------------------------------------
\section{Final remarks and open problems}\label{sec:final}
\subsection{Embedding of the algebra $C_{p(\cdot)}(\dR)$ into the algebra 
$SO_{p(\cdot)}$ of slowly oscillating Fourier multipliers} 
Let $C_b(\R):=C(\R)\cap L^\infty(\R)$. For a bounded measurable function
$f:\R\to\C$ and a set $J\subset\R$, let
\[
\operatorname{osc}(f,J):=
\operatornamewithlimits{ess\,sup}_{x,y\in J}|f(x)-f(y)|.
\]
Let $SO$ be the $C^*$-algebra of all slowly oscillating functions at $\infty$ 
defined by
\[
SO:=\left\{f\in C_b(\R): \lim_{x\to +\infty}
\operatorname{osc}(f,[-x,-x/2]\cup[x/2,x])=0\right\}.
\]
Consider the differential operator $(Df)(x)=xf'(x)$ and its iterations defined
by $D^0f=f$ and $D^jf=D(D^{j-1}f)$ for $j\in\N$. Let
\[
SO^3:=\left\{a\in SO\cap C^3(\R):\lim_{x\to\infty} (D^ja)(x)=0,\ j=1,2,3
\right\},
\]
where $C^3(\R)$ denotes the set of all three times continuously differentiable
functions. It is easy to see that $SO^3$ is a commutative Banach algebra under
pointwise operations and the norm
\[
\|a\|_{SO^3}:=\sum_{j=0}^3\frac{1}{j!}\|D^ja\|_{L^\infty(\R)}.
\]
It follows from \cite[Corollary~2.8]{K15c} that if $p(\cdot)\in\cB_M(\R)$,
then there exists a positive constant $c_{p(\cdot)}$ depending only on
the variable exponent $p(\cdot)$ such that for every function $a\in SO^3$,
\[
\|a\|_{M_{p(\cdot)}}\le c_{p(\cdot)}\|a\|_{SO^3}.
\]
For $p(\cdot)\in\cB_M(\R)$, consider the algebra of slowly oscillating at
$\infty$ Fourier multipliers defined by
\[
SO_{p(\cdot)}:=\operatorname{clos}_{M_{p(\cdot)}}\big(SO^3\big).
\]

Since $\C{\bf{\dot{+}}} C_c^\infty(\R)\subset SO^3$, Theorem~\ref{th:first} 
yields the following analogue of \cite[Lemma~3.6]{KJLH09}.
%%%----------------------------------------------------------------------------
\begin{corollary}
If $p(\cdot)\in\cB_M(\R)$, then $C_{p(\cdot)}(\dR)\subset SO_{p(\cdot)}$.
\end{corollary}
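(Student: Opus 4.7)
The plan is to invoke Theorem~\ref{th:first} to reduce everything to a set-theoretic inclusion between generators before taking closures. By Theorem~\ref{th:first}, we have
\[
C_{p(\cdot)}(\dR)=\operatorname{clos}_{M_{p(\cdot)}}\big(\C{\bf{\dot{+}}} C_c^\infty(\R)\big),
\]
and by definition $SO_{p(\cdot)}=\operatorname{clos}_{M_{p(\cdot)}}(SO^3)$. Since the closure operation in $M_{p(\cdot)}$ is monotone with respect to inclusion of underlying sets, it suffices to verify the pointwise inclusion
\[
\C{\bf{\dot{+}}} C_c^\infty(\R)\subset SO^3.
\]

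The verification of this inclusion is the only real content. First I would note that any constant $c\in\C$ trivially lies in $SO^3$: it is in $C_b(\R)\cap C^3(\R)$, has vanishing oscillation over any set, and $D^j c=0$ for $j=1,2,3$. Next, for $\varphi\in C_c^\infty(\R)$, $\varphi\in C_b(\R)\cap C^3(\R)$, and an easy induction shows that $D^j\varphi$ is a finite linear combination of terms of the form $x^k\varphi^{(k)}(x)$ with $1\le k\le j$; each such term has compact support (being a product of a polynomial with a compactly supported smooth function), hence vanishes identically outside a bounded set and in particular tends to $0$ as $x\to\infty$. The compact support also forces $\operatorname{osc}(\varphi,[-x,-x/2]\cup[x/2,x])=0$ for all sufficiently large $x$, so $\varphi\in SO$. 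Combining these observations, $\varphi\in SO^3$, and since $SO^3$ is an algebra (in particular closed under addition) we conclude that $c+\varphi\in SO^3$.

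Taking $M_{p(\cdot)}$-closures of both sides of the inclusion $\C{\bf{\dot{+}}} C_c^\infty(\R)\subset SO^3$ yields the desired embedding $C_{p(\cdot)}(\dR)\subset SO_{p(\cdot)}$. There is no real obstacle here; the only subtlety is remembering that Theorem~\ref{th:first} provides generators for $C_{p(\cdot)}(\dR)$ that are manifestly slowly oscillating with decaying iterated $D$-derivatives, which is exactly what the definition of $SO^3$ requires.
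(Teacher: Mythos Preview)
Your proof is correct and follows exactly the paper's approach: the paper derives the corollary from the one-line observation that $\C{\bf{\dot{+}}} C_c^\infty(\R)\subset SO^3$ together with Theorem~\ref{th:first}, and you have simply spelled out the verification of that inclusion in detail. There is nothing to add or change.
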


%%%----------------------------------------------------------------------------
\subsection{On relations between $PC_{p(\cdot)}$, 
$PC_{\langle p(\cdot)\rangle}^0$ and $PC_{\langle p(\cdot)\rangle}$}
Let $PC$ be the $C^*$-algebra of all bounded piecewise continuous functions on 
$\dR$. By definition, $a\in PC$ if and only if $a\in L^\infty(\R)$ and the
finite one-sided limits
\[
a(x_0-0):=\lim_{x\to x_0-0}a(x),
\quad
a(x_0+0):=\lim_{x\to x_0+0}a(x)
\] 
exist for each $x_0\in\dR$. Let $PC^0$ (resp. $P\C^0$) denote the set of all 
piecewise continuous functions with finitely many jumps (resp. piecewise
constant functions with finitely many jumps). It is clear that 
$P\C^0\subset V(\R)$. By \cite[Lemma~2.10]{D79}, $V(\R)\subset PC$.
For $p(\cdot)\in\cB_M(\R)$, consider the algebra
%%%
\[
PC_{p(\cdot)} 
:=
\operatorname{clos}_{M_{p(\cdot)}} \big( V(\R) \big).
\]
%%%----------------------------------------------------------------------------
\begin{theorem}\label{th:PC-multipliers}
If $p(\cdot)\in\cB_M(\R)$, then 
%%%
\begin{equation}\label{eq:PC-multipliers-1}
PC_{p(\cdot)}=\operatorname{clos}_{M_{p(\cdot)}}\big(P\C^0\big).
\end{equation}
\end{theorem}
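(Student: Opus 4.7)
The inclusion $\operatorname{clos}_{M_{p(\cdot)}}(P\C^0) \subset PC_{p(\cdot)}$ is immediate from $P\C^0 \subset V(\R)$, so the task is to show that every $a \in V(\R)$ lies in $\operatorname{clos}_{M_{p(\cdot)}}(P\C^0)$. My plan is to exhibit, for each $a \in V(\R)$, a sequence $\{b_n\} \subset P\C^0$ with $\|a - b_n\|_{L^\infty(\R)} \to 0$ and $V(a - b_n)$ uniformly bounded in $n$, and then to promote this to convergence in the multiplier norm by exactly the interpolation device used in the proof of Theorem~\ref{th:vanishing-multipliers}(b).

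By the Jordan decomposition, every $a \in V(\R)$ is a difference of two bounded non-decreasing functions of finite total variation; since $P\C^0$ is a linear subspace, it suffices to treat a non-decreasing $a$ with $a_\pm := \lim_{x\to\pm\infty} a(x)$ and $V(a) = a_+ - a_-$. For each $n \in \N$ partition $[a_-, a_+]$ into $n$ equal sub-intervals with endpoints $y_k := a_- + k(a_+ - a_-)/n$, and set $b_n(x) := y_{k-1}$ whenever $a(x) \in [y_{k-1}, y_k)$ (with the rightmost sub-interval taken closed). Because $a$ is monotone, each level set $\{x : a(x) \in [y_{k-1}, y_k)\}$ is an interval, so $b_n$ is a non-decreasing step function with at most $n-1$ jumps, and therefore $b_n \in P\C^0$. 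One then immediately checks $\|a - b_n\|_{L^\infty(\R)} \le (a_+ - a_-)/n$ and $V(b_n) = y_{n-1} - y_0 \le V(a)$, whence $V(a - b_n) \le 2V(a)$.

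To upgrade these two controls to the multiplier norm, I would apply Theorem~\ref{th:Deining-interpolation} to write $1/p(x) = \theta/p_0 + (1-\theta)/p_\theta(x)$ with $p_0 \in (1, \infty)$, $p_\theta(\cdot) \in \cB_M(\R)$, and $\theta \in (0, 1)$; if $p_0 \ne 2$, further split $1/p_0 = \eta/2 + (1-\eta)/q$ for a constant $q$ chosen on the appropriate side of $p_0$ and some $\eta \in (0, 1]$, exactly as in the proof of Theorem~\ref{th:vanishing-multipliers}(b) (the case $p_0 = 2$ corresponds to $\eta = 1$). Chaining the variable Riesz-Thorin interpolation (Theorem~\ref{th:interpolation}) with its classical constant-exponent counterpart, using $\|\cdot\|_{M_2} = \|\cdot\|_{L^\infty(\R)}$, and bounding $\|a - b_n\|_{M_q}$ and $\|a - b_n\|_{M_{p_\theta(\cdot)}}$ by $V(a - b_n) \le 2V(a)$ via the Stechkin inequality \eqref{eq:Stechkin}, one arrives at an estimate of the form
\[
\|a - b_n\|_{M_{p(\cdot)}} \le C\,\|a - b_n\|_{L^\infty(\R)}^{\eta\theta}\, V(a - b_n)^{1 - \eta\theta} \le C'\, n^{-\eta\theta}\, V(a),
\]
which tends to $0$ as $n \to \infty$ since $\eta\theta > 0$.

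The main obstacle is making the step-function approximant live in $P\C^0$ (that is, have only \emph{finitely many} jumps) while simultaneously keeping $\|a - b_n\|_{L^\infty(\R)}$ small and $V(a - b_n)$ bounded: for a general (non-monotone) $a \in V(\R)$ a horizontal slicing of the range would not produce interval level sets, so the Jordan reduction to the monotone case is essential. Once this approximation is in hand, the upgrade to the $M_{p(\cdot)}$-norm is a routine repetition of the interpolation argument already carried out in Theorem~\ref{th:vanishing-multipliers}(b).
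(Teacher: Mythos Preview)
Your proof is correct and follows essentially the same approach as the paper: approximate a function of bounded variation by piecewise constants in $L^\infty$ with uniformly bounded total variation, then upgrade to convergence in $M_{p(\cdot)}$ via the Diening/Riesz--Thorin interpolation chain exactly as in Theorem~\ref{th:vanishing-multipliers}(b). The only difference is that the paper cites \cite[Lemma~2.10]{D79} for the step-function approximation, whereas you construct it explicitly via the Jordan decomposition (which, for complex-valued $a\in V(\R)$, should be applied to the real and imaginary parts separately).
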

%%%----------------------------------------------------------------------------
\begin{proof}
Since $P\C^0\subset V(\R)$, we obviously have
%%%
\begin{equation}\label{eq:PC-multipliers-2}
\operatorname{clos}_{M_{p(\cdot)}}\big(P\C^0\big)\subset PC_{p(\cdot)}.
\end{equation}
%%%

Let $a\in PC_{p(\cdot)}$ and $\eps>0$. Then there exists $b\in V(\R)$
such that 
%%%
\begin{equation}\label{eq:PC-multipliers-3}
\|a-b\|_{M_{p(\cdot)}}<\eps/2.
\end{equation}
%%%
By \cite[Lemma~2.10]{D79}, there exists a sequence 
$\{b_n\}_{n\in\N}\subset P\C^0$ such that
%%%
\begin{equation}\label{eq:PC-multipliers-4}
\lim_{n\to\infty}\|b_n-b\|_{L^\infty(\R)},
\quad
\sup_{n\in\N}V(b_n)\le V(b).
\end{equation}
%%%
Then there exists $N\in\N$ such that
%%%
\begin{equation}\label{eq:PC-multipliers-5}
\sup_{n\ge N}\|b_n\|_{V(\R)}\le 2\|b\|_{V(\R)}.
\end{equation}

Let $p_0\in(1,\infty)$, $\theta\in(0,1)$, $\eta\in(0,1]$ and 
$p_\theta(\cdot)\in\cB_M(\R)$ be as in the proof of 
Theorem~\ref{th:vanishing-multipliers}(b). It follows from the Stechkin
type inequality \eqref{eq:Stechkin} and inequality \eqref{eq:PC-multipliers-5}
that for all $n\ge N$,
%%%
\begin{align}
\label{eq:PC-multipliers-6}
&
\|b_n-b\|_{M_{p_\theta(\cdot)}}
\le
\|b_n\|_{M_{p_\theta(\cdot)}}+\|b\|_{M_{p_\theta(\cdot)}}
\le 
c_\theta\|b\|_{V(\R)},
\\
\label{eq:PC-multipliers-7}
&
\|b_n-b\|_{M_q}
\le
\|b_n\|_{M_q}+\|b\|_{M_q}
\le 
c_q\|b\|_{V(\R)},
\end{align}
%%%
where $c_\theta:=3\|S\|_{\cB(L^{p_\theta(\cdot)}(\R))}$
and $c_q:=3\|S\|_{\cB(L^q(\R))}$.

Equalities \eqref{eq:Diening-interpolation} and 
\eqref{eq:vanishing-multipliers-11},
Theorem~\ref{th:interpolation} and inequalities 
\eqref{eq:PC-multipliers-6}--\eqref{eq:PC-multipliers-7} imply that for every 
$n\ge N$,
%%%
\begin{align}\label{eq:PC-multipliers-8}
\|b_n-b\|_{M_{p(\cdot)}}
&\le 
4\|b_n-b\|_{M_{p_0}}^\theta
\|b_n-b\|_{M_{p_\theta(\cdot)}}^{1-\theta}
\\
\nonumber
&\le 
4\left(
\|b_n-b\|_{L^\infty(\R)}^\eta
\|b_n-b\|_{M_q}^{1-\eta}
\right)^\theta
\|b_n-b\|_{M_{p_\theta(\cdot)}}^{1-\theta}
\\
\nonumber
&\le 
4c_q^{(1-\eta)\theta}c_\theta^{1-\theta}\|b\|_{V(\R)}^{(1-\eta)\theta+1-\theta}
\|b_n-b\|_{L^\infty(\R)}^{\eta\theta}.
\end{align}
%%%
It follows from the equality in \eqref{eq:PC-multipliers-4} and inequality
\eqref{eq:PC-multipliers-8} that there exists $n_0\ge N$ such that
%%%
\begin{equation}\label{eq:PC-multipliers-9}
\|b_{n_0}-b\|_{M_{p(\cdot)}}<\eps/2.
\end{equation}
%%%
Combining inequalities \eqref{eq:PC-multipliers-3} and 
\eqref{eq:PC-multipliers-9} we see that for every $\eps>0$ there exists 
$c:=b_{n_0}\in P\C^0$ such that $\|a-c\|_{M_{p(\cdot)}}<\eps$. Hence
$a\in\operatorname{clos}_{M_{p(\cdot)}}\big(P\C^0\big)$. Thus
%%%
\begin{equation}\label{eq:PC-multipliers-10}
PC_{p(\cdot)}\subset\operatorname{clos}_{M_{p(\cdot)}}\big(P\C^0\big).
\end{equation}
%%%
Embeddings \eqref{eq:PC-multipliers-2} and \eqref{eq:PC-multipliers-10}
yield equality \eqref{eq:PC-multipliers-1}.
\qed
\end{proof}
%%%----------------------------------------------------------------------------

Further, for $p(\cdot)\in\cB_M^*(\R)$, consider the algebras
\[
PC_{\langle p(\cdot) \rangle}^0
:=
\operatorname{clos}_{M_{p(\cdot)}}
\big( PC^0\cap M_{\langle p(\cdot) \rangle}\big),
\quad
PC_{\langle p(\cdot) \rangle} 
:=
\operatorname{clos}_{M_{p(\cdot)}} 
\big( PC\cap M_{\langle p(\cdot) \rangle}\big).
\]
%%%----------------------------------------------------------------------------
\begin{theorem}\label{th:finite-jumps-multipliers}
If $p(\cdot)\in\cB_M^*(\R)$, then
$PC_{p(\cdot)}=PC_{\langle p(\cdot)\rangle}^0
\subset PC_{\langle p(\cdot)\rangle}$. 
\end{theorem}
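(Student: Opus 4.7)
The embedding $PC_{\langle p(\cdot)\rangle}^0 \subset PC_{\langle p(\cdot)\rangle}$ is immediate from $PC^0 \subset PC$, which passes through the intersection with $M_{\langle p(\cdot)\rangle}$ and then through the closure in $M_{p(\cdot)}$. The real content is the equality $PC_{p(\cdot)} = PC_{\langle p(\cdot)\rangle}^0$, which I would establish by two inclusions. For $PC_{p(\cdot)} \subset PC_{\langle p(\cdot)\rangle}^0$, I would combine Theorem~\ref{th:PC-multipliers}, which gives $PC_{p(\cdot)} = \operatorname{clos}_{M_{p(\cdot)}}(P\C^0)$, with the Stechkin type inequality \eqref{eq:Stechkin}: for each $\theta \in (0,\tau_{p(\cdot)})$ we have $P\C^0 \subset V(\R) \subset M_{p_\theta(\cdot)} \subset M_{\langle p(\cdot)\rangle}$, and trivially $P\C^0 \subset PC^0$. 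Hence $P\C^0 \subset PC^0 \cap M_{\langle p(\cdot)\rangle}$, and taking closures in $M_{p(\cdot)}$ yields the embedding.

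For the reverse direction $PC_{\langle p(\cdot)\rangle}^0 \subset PC_{p(\cdot)}$, the idea is to split every $a \in PC^0 \cap M_{\langle p(\cdot)\rangle}$ into a piecewise constant jump part and a continuous remainder, reducing the problem to the continuous case settled by Theorem~\ref{th:second}. Enumerating the jump points of $a$ in $\R$ as $x_1 < \dots < x_n$ and writing $\alpha_k := a(x_k+0)-a(x_k-0)$, I would set
\[
j(x) := \sum_{k=1}^n \alpha_k\,\chi_{[x_k,+\infty)}(x), \qquad g := a-j.
\]
Then $j \in P\C^0 \subset V(\R)$, and $g$ is continuous on $\R$ with finite one-sided limits at $\pm\infty$ (inherited from $a \in PC^0 \subset PC$), so $g \in C(\overline{\R})$. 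Choosing $\theta \in (0,\tau_{p(\cdot)})$ with $a \in M_{p_\theta(\cdot)}$, Stechkin's inequality gives $j \in V(\R) \subset M_{p_\theta(\cdot)}$ for the \emph{same} $\theta$, whence $g = a-j \in M_{p_\theta(\cdot)} \subset M_{\langle p(\cdot)\rangle}$. The second equality in Theorem~\ref{th:second} then yields
\[
g \in C(\overline{\R}) \cap M_{\langle p(\cdot)\rangle} \subset C_{\langle p(\cdot)\rangle}(\overline{\R}) = C_{p(\cdot)}(\overline{\R}) \subset \operatorname{clos}_{M_{p(\cdot)}}\big(V(\R)\big) = PC_{p(\cdot)},
\]
and since $j \in V(\R) \subset PC_{p(\cdot)}$ while $PC_{p(\cdot)}$ is a closed linear subspace of $M_{p(\cdot)}$, we obtain $a = g + j \in PC_{p(\cdot)}$. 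Passing to closures in $M_{p(\cdot)}$ promotes this to $PC_{\langle p(\cdot)\rangle}^0 \subset PC_{p(\cdot)}$.

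The only point requiring a bit of care, and the step I expect to be the mild obstacle, is the coordination of the Stechkin control of $j$ with the specific $\theta$ at which $a$ is known to lie in $M_{p_\theta(\cdot)}$: one needs $g = a-j$ to sit in \emph{the same} $M_{p_\theta(\cdot)}$ so that the membership $g \in M_{\langle p(\cdot)\rangle}$ used in invoking Theorem~\ref{th:second} is genuine. Once this compatibility is ensured, the remainder is a routine reduction to the continuous-Fourier-multiplier case settled by Theorem~\ref{th:second}, and no further interpolation machinery beyond Theorems~\ref{th:second} and~\ref{th:PC-multipliers} seems necessary.
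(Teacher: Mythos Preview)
Your proof is correct and follows the same underlying strategy as the paper: split an element of $PC^0\cap M_{\langle p(\cdot)\rangle}$ into a jump part lying in $V(\R)$ and a continuous remainder, then handle the remainder via the already-established continuous case. The executions differ only in details. The paper builds piecewise linear ``jump killers'' $J_b(x_0,\cdot)$ (including one at $\infty$) so that the remainder lands in $C_0(\R)\cap M_{\langle p(\cdot)\rangle}$, and then invokes Theorem~\ref{th:vanishing-multipliers}(a) directly, carrying out an explicit $\eps/2$ argument. You instead subtract a step function $j\in P\C^0$, land in $C(\overline{\R})\cap M_{\langle p(\cdot)\rangle}$, and cite Theorem~\ref{th:second} as a black box; the $\eps$-argument is absorbed into that citation and you simply pass to closures at the end. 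Your worry about the compatibility of $\theta$ is well-placed but, as you note, resolved immediately by the fact that Stechkin's inequality gives $V(\R)\subset M_{p_\theta(\cdot)}$ for \emph{every} admissible $\theta$, so $j$ automatically sits in the same $M_{p_\theta(\cdot)}$ as $a$. Both routes are short; yours is marginally cleaner by reusing Theorem~\ref{th:second} rather than reaching back to Theorem~\ref{th:vanishing-multipliers}(a).
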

%%%----------------------------------------------------------------------------
\begin{proof}
The embedding
%%%
\begin{equation}\label{eq:finite-jumps-multipliers-1}
PC_{\langle p(\cdot)\rangle}^0\subset PC_{\langle p(\cdot)\rangle}
\end{equation}
%%%
is obvious.

Since $P\C^0\subset PC^0\cap V(\R)\subset PC^0\cap M_{\langle p(\cdot)\rangle}$
in view of the Stechkin type inequality \eqref{eq:Stechkin}, it follows from
Theorem~\ref{th:PC-multipliers} that
%%%
\begin{equation}\label{eq:finite-jumps-multipliers-2}
PC_{p(\cdot)}\subset  PC_{\langle p(\cdot)\rangle}^0.
\end{equation}
%%%

To prove the opposite embedding, take $a\in PC_{\langle p(\cdot)\rangle}^0$ and
$\eps>0$. Then there exists $b\in PC^0\cap M_{\langle p(\cdot)\rangle}$ such
that
%%%
\begin{equation}\label{eq:finite-jumps-multipliers-3}
\|a-b\|_{M_{p(\cdot)}}<\eps/2.
\end{equation}
%%%

Let $x_1,\dots,x_m\in\dR$ be the jumps of $b$. Define the function
$J_b(\infty,\cdot)$ by \eqref{eq:jump-killer-at-infinity} with $b$
in place of $f$. For $x_0\in\{x_1,\dots,x_m\}\setminus\{\infty\}$, let
\[
J_b(x_0,x):=
\left\{\begin{array}{lll}
f(x_0+0)(x_0+1-x) &\mbox{if}& x\in(x_0,x_0+1],
\\
f(x_0-0)(x+1-x_0) &\mbox{if}& x\in[x_0-1,x_0],
\\
0 &\mbox{if}& x\in\R\setminus[x_0-1,x_0+1].
\end{array}\right.
\]
It is easy to see that the function 
\[
c:=\sum_{x_0\in\{x_1,\dots,x_m\}\setminus\{\infty\}} J_b(x_0,\cdot)
+J_b(\infty,\cdot)
\]
belongs to $V(\R)$. Hence $c\in M_{\langle p(\cdot)\rangle}$ in view of the
Stechkin type inequality \eqref{eq:Stechkin}. On the other hand, it follows
from the construction of the function $c$ that the function $d:=b-c$ belongs 
to $C_0(\R)$. Since $b,c\in M_{\langle p(\cdot)\rangle}$, we see that 
$d\in C_0(\R)\cap M_{\langle p(\cdot)\rangle}$. It follows from
Theorem~\ref{th:vanishing-multipliers}(a) that there exists 
$f\in C_c^\infty(\R)\subset V(\R)$ such that
%%%
\begin{equation}\label{eq:finite-jumps-multipliers-4}
\|b-c-f\|_{M_{p(\cdot)}}=\|d-f\|_{M_{p(\cdot)}}<\eps/2.
\end{equation}

Inequalities \eqref{eq:finite-jumps-multipliers-3} and
\eqref{eq:finite-jumps-multipliers-4} imply that 
$\|a-(c+f)\|_{M_{p(\cdot)}}<\eps$. Since $c+f\in V(\R)$, the latter inequality
implies that 
$a\in \operatorname{clos}_{M_{p(\cdot)}}\big(V(\R)\big)=PC_{p(\cdot)}$.
Thus
%%%
\begin{equation}\label{eq:finite-jumps-multipliers-5}
PC_{\langle p(\cdot)\rangle}^0\subset PC_{p(\cdot)}.
\end{equation}
%%%
Combining embeddings \eqref{eq:finite-jumps-multipliers-1},
\eqref{eq:finite-jumps-multipliers-2} and \eqref{eq:finite-jumps-multipliers-5},
we arrive at the desired statement.
\qed
\end{proof}
%%%----------------------------------------------------------------------------

We have not been able to show that $PC\cap M_{\langle p(\cdot)\rangle}$
is a subset of $PC_{p(\cdot)}$ and therefore we must raise the inclusion
$PC\cap M_{\langle p(\cdot)\rangle}\subset PC_{p(\cdot)}$ and the
resulting equality $PC_{p(\cdot)}=PC_{\langle p(\cdot)\rangle}$ as an open
question even in the case of constant exponents 
$p\in(1,\infty)\setminus\{2\}$ (see also \cite[Section~6.27]{BS06} for the
discrete analogue of this open problem).
%%%----------------------------------------------------------------------------
\subsection{On relations between $C_p(\overline{\R})$ and $\Pi C_p^\infty(\R)$}
Let $p\in(1,\infty)\setminus\{2\}$ be constant. 
R. Duduchava and A. Saginashvili 
\cite[\S 1.3]{DS81} (see also \cite[p.~124]{KS94}) defined the algebra
\[
\Pi C_p^\infty(\R):=C(\overline{\R})
\cap 
\operatorname{clos}_{M_p}\big(M_{\langle p\rangle}\big),
\]
where $M_{\langle p\rangle}$ is defined by \eqref{eq:M-p-cloud}. 
It follows from Corollary~\ref{co:standard-Lp} and the continuous
embedding $M_p\subset L^\infty(\R)$ 
(see, e.g., Theorem~\ref{th:continuous-embedding-of-multipliers})
that
\[
C_p(\overline{\R})
=
\operatorname{clos}_{M_p}
\big(C(\overline{\R})\cap M_{\langle p\rangle}\big)
\subset
C(\overline{\R})
\cap 
\operatorname{clos}_{M_p}\big(M_{\langle p\rangle}\big)
=\Pi C_p^\infty(\R).
\]
In \cite[Lemma~3.1~(ii)]{KS94} (see also \cite[p.~385]{BKS02}) it is claimed 
that  $C_p(\overline{\R})=\Pi C_p^\infty(\R)$. However, the proof of the 
embedding
%%%
\begin{equation}\label{eq:Karlovich-Spitkovsky-open}
\Pi C_p^\infty(\R)\subset C_p(\overline{\R})
\end{equation}
%%%
in \cite{KS94} contains a gap because it is not explained why one can 
approximate a function $a\in \Pi C_p^\infty(\R)$ in the norm of $M_p$ by 
functions  $a_n\in C(\overline{\R})\cap  M_{r_n}$, where $r_n\in R_p$ for all 
$n\in\N$ and $R_p$ is given by \eqref{eq:Rp}. Since we have not been able 
either to prove or to disprove  \eqref{eq:Karlovich-Spitkovsky-open}, 
we left it as an open problem.
%%%----------------------------------------------------------------------------
\subsection{On relations between $\cB_M^*(\R)$ and $\cB_M(\R)$}
As it was already mentioned,  $LH(\R)\subset\cB_M^*(\R)\subset\cB_M(\R)$ and 
$LH(\R)\subsetneqq\cB_M(\R)$. We have not been able to prove or to disprove
the equality
\[
\cB_M^*(\R)=\cB_M(\R).
\]
We left it as an open problem.
%%%----------------------------------------------------------------------------

\bigskip
\textbf{Acknowledgments.} 
This work was partially supported by the Funda\c{c}\~ao para a Ci\^encia e a
Tecnologia (Portu\-guese Foundation for Science and Technology)
through the project
UID/MAT/00297/2019 (Centro de Matem\'atica e Aplica\c{c}\~oes).

The author would like to thank Eugene Shargorodsky (King's College London, UK)
for interesting discussions on the subject of the paper.
%%%----------------------------------------------------------------------------
%\bibliographystyle{amsplain}

%%%------------------------------------------------------------------------
%\section*{References}

\end{document}